\documentclass[%
 aip,
 cha,
 amsmath,amssymb,
 reprint,%
]{revtex4-1}

\usepackage[utf8]{inputenc}
\usepackage[T1]{fontenc}
\usepackage{graphicx}
\usepackage{overpic}

\usepackage{longtable}
\usepackage{wrapfig}
\usepackage{rotating}
\usepackage[normalem]{ulem}
\usepackage{amsmath}
\usepackage{enumitem}
\usepackage{amssymb}
\usepackage{capt-of}
\usepackage{hyperref}
\usepackage{minted}
\usepackage{tikz}
\usetikzlibrary{arrows.meta, positioning}
\usepackage{amsmath,amssymb,bm, tikz, amsthm}
\usepackage{diffcoeff}

\usepackage{setspace, cancel}
\usepackage{parskip, xcolor}
\usepackage[most]{tcolorbox}
\usepackage[left=2.0cm,top=2.0cm,right=2.0cm,bottom=2cm]{geometry}
\usepackage[export]{adjustbox}
\newcommand{\ds}{d} % dim of single node (System)
\newcommand{\dn}{n} % nr Nodes
\newcommand{\x}{x}% ode variable
\newcommand{\xx}{\mathbf{x}}% ode variable, bold face
\newcommand{\X}{x}% fundamental solution, normal face
\newcommand{\XX}{\mathbf{x}}% fundamental solution, normal face
\newcommand{\Lg}{{L}} %Generators of VGL algebra/tangent vector \in \Tvspace(M)
\newcommand{\vgl}{\mathfrak{v}}

\newcommand{\gl}[1]{\mathfrak{gl}(#1)}
\renewcommand{\sl}[1]{\mathfrak{sl}(#1)}
\newcommand{\aff}[1]{\mathfrak{aff}(#1)}
\newcommand{\Tvspace}{\mathfrak{X}} % space of vector fields
\newcommand{\Sf}{\bm{\Phi}} %Superposition function
\newcommand{\xixi}{\bm \xi}
\newcommand{\z}{\mathbf z} % coordinates in lifted tangent vector space (group theory)
\newcommand{\m} {\mathbf{X}}% microscopic variable (STACKED)
\newcommand{\mc}{\mathbf{x}}% microscopic variable component
\newcommand{\Csf}{\bm \varphi}% Collection of superposition principles
\newcommand{\M}{\mathbf{Q}}% macroscopic variable (STACKED)
\newcommand{\Mc}{{q}}% macroscopic variable component
\newcommand{\mf}{\mu} % mean field function (scalar)
\newcommand{\mfmf}{{\bm\mu}} % mean field function (vector)
\newcommand{\half}{\frac{1}{2}}
\renewcommand{\d}{\text{d}}
\newcommand{\N}{\mathbb{N}}

\newcommand{\R}{\mathbb{R}}
\newcommand{\C}{\mathbb{C}}

\theoremstyle{definition} % this style applies to the following
\newtheorem{definition}{Definition}

\theoremstyle{remark} % this style applies to the following

\theoremstyle{plain} % this style applies to the following

\newtheorem{lemma}{Lemma}
\newtheorem{theorem}{Theorem}

\usepackage{dcolumn}% Align table columns on decimal point
\usepackage{bm}% bold math
\usepackage[utf8]{inputenc}
\usepackage[T1]{fontenc}
\usepackage{mathptmx}
\usepackage{etoolbox}

\makeatletter
\def\@email#1#2{%
 \endgroup
 \patchcmd{\titleblock@produce}
  {\frontmatter@RRAPformat}
  {\frontmatter@RRAPformat{\produce@RRAP{*#1\href{mailto:#2}{#2}}}\frontmatter@RRAPformat}
  {}{}
}%
\makeatother
\begin{document}

\preprint{AIP/123-QED}

\title{Lie Meets Network Dynamics:\\ Exact Macroscopic Reductions (Finite Systems)}

\author{Erik A. Martens}
\affiliation{IMFUFA, Department of Science and Environment, Roskilde University, Roskilde, Denmark}
\email{emartens@ruc.dk}

\affiliation{Centre for Mathematical Sciences, Lund University, M\"arkesbacken 4, 223 62 Lund, Sweden}
% \email{erik.martens@math.lth.se}

\author{Sanjay Dharmavaram}
\affiliation{Department of Mathematics and Statistics, Bucknell University, Lewisburg, PA 17837, USA}
\email{sd045@bucknell.edu}

\date{\today}% It is always \today, today,
             %  but any date may be explicitly specified

\begin{abstract}

We establish a unified framework for exact dimensional reductions in network dynamical systems using Lie–Scheffers theory. For network dynamical systems with \emph{mean-field Lie-Scheffers structure}, we prove that networks of $\dn$ nodes with local dimension $\ds$ can be exactly reduced from $\dn\ds$ dimensions to a fixed macroscopic system of dimension $m\ds$, where $m$ is the number of fundamental solutions required by the nodal dynamics. Crucially, the superposition principle resulting from the Lie-algebraic structure allows the mean-field coupling to be expressed explicitly in terms of the macroscopic variables, yielding a \emph{closed} self-consistent system independent of network size. This reduction collapses the high-dimensional network flow onto invariant manifolds parameterized by $\gamma=\ds(\dn-m)$ independent constants of motion. Our framework rigorously explains known reductions and provides a \emph{systematic method to discover new ones}. We illustrate the theory with ensembles of Riccati equations (encompassing the Kuramoto model and Theta neuron model), quasi-linear ODEs, and generalized Bernoulli equations, explicitly deriving the macroscopic flows and conserved quantities for each case.

\end{abstract}

\maketitle

% ===========================================================================

\section{Introduction\label{sec:introduction}}

Network dynamical systems model the time evolution of coupled units across neuroscience, epidemiology, engineering, and social sciences \cite{Pikovsky2001,Strogatz2003,Arenas2008}. A central challenge is understanding how collective behaviors emerge from local interactions while managing the high dimensionality of large networks. For systems with $\dn$ nodes each carrying a $\ds$-dimensional state, the full phase space has dimension $\dn\ds$, rendering analysis and control prohibitively expensive for large $\dn$.

Significant progress has been made for certain classes of \emph{identically coupled} oscillators. For populations of Kuramoto phase oscillators with sinusoidal coupling~\cite{Kuramoto1984,Strogatz2000}, Watanabe and Strogatz~\cite{WatanabeStrogatz1993,WatanabeStrogatz1994} proved the existence of $\dn-3$ independent constants of motion, effectively reducing the dynamics to a three-dimensional manifold. This result was later connected to Möbius group symmetries and Riccati-type structure~\cite{Marvel2009,Mirollo2012}, revealing that the microscopic flow is constrained to low-dimensional invariant manifolds determined by $m=3$ macroscopic variables. Subsequent extensions addressed complex-valued oscillators~\cite{CestnikMartens2024} and higher-dimensional generalizations on matrix Riccati equations~\cite{lohe2019systems}. Despite these advances, the underlying mathematical mechanism enabling such reductions has remained fragmentary across disparate models and approaches.

Beyond oscillators, recent work has identified dimensional collapse in arrays of quasi-linear ordinary differential equations~\cite{augustsson2025quasilinear}, 
suggesting that the phenomenon extends beyond Riccati-type flows, while Lohe~\cite{lohe2025exact} extended matrix-based integration techniques to higher-dimensional generalizations of Riccati on spheres. However, a unified theoretical framework explaining \emph{when} and \emph{why} such reductions occur has been lacking. 
Two fundamental questions remain open:
\renewcommand{\labelenumi}{\roman{enumi})}
\begin{enumerate}
    \item What structural conditions on the nodal dynamics guarantee the existence of exact dimensional reductions? 
    \item  Can we develop a systematic method to discover new reducible systems beyond ad hoc derivations?
\end{enumerate}

This paper addresses these questions by embedding network dimensional reductions within the classical framework of Lie-Scheffers theory~\cite{lie1893vorlesungen,carinena2011lie}. Our key insight is that the emergence of low-dimensional macroscopic dynamics arises precisely when the nodal vector field admits a superposition principle---i.e., when the microscopic flow can be reconstructed from a fixed set of $m$ fundamental solutions via a nonlinear composition rule. We focus on \emph{mean-field type network dynamical systems}, where all nodes $i=1,\ldots,n$ are described by the state $\mc_i(t)\in\R^{\ds}$ and obey identical dynamics $f$ and identical mean-field coupling $\mf(\xx)$:
\begin{align}
    \label{eq:microscopic_homogeneous} 
    \mc'_i(t) = f\big(\mc_i, \bm \mf (\xx(t)), t\big).
\end{align} 
Under these conditions, the Lie-Scheffers structure ensures that the entire network trajectory remains confined to an $m\ds$-dimensional invariant manifold (leaf of a foliation), where $m$ is the number of fundamental solutions required by the nodal ODE's Vessiot-Guldberg Lie algebra. The leaf is parameterized by $m$ macroscopic variables evolving autonomously, plus $\gamma=\ds (\dn-m)$ independent leaf parameters that serve as constants of motion distinguishing different invariant manifolds.

We formalize this framework and demonstrate its applicability through three illustrative examples: 
\begin{enumerate}
    \item Arrays of Riccati equations, encompassing the Kuramoto model, Theta neurons, and related oscillator systems; 
    \item Quasi-linear ODE arrays extending prior reduction results; and
    \item Generalized Bernoulli equations exhibiting a different VGL algebra structure ($\aff{1}$).
\end{enumerate}
 In each case, we explicitly construct the superposition principle, derive the closed macroscopic dynamics, and identify the conserved quantities.

Our contributions are twofold. First, we provide rigorous footing for existing dimensional reduction results by identifying Lie–Scheffers structure as the underlying mechanism. Second, we establish a systematic methodology for testing whether novel network architectures admit exact reductions, extending beyond the specific examples treated here. While heterogeneous systems (where nodes have distinct dynamics or non-uniform couplings) present fundamentally different challenges~\cite{OttAntonsen2008,Gkogkas2023}, our work clarifies the structural prerequisites for identically coupled settings and suggests pathways toward broader applicability.

The remainder of the paper proceeds as follows. Section~\ref{sec:problem formulation} formulates the problem. Section~\ref{sec:primer} introduces Lie-Scheffers theory for single-node systems. Section~\ref{sec:main_result} presents our main theorem on mean-field network reductions. Section~\ref{sec:applications} applies the framework to quasi-linear, Riccati and Bernoulli examples. Finally, Section~\ref{sec:discussion} discusses implications for heterogeneous networks and future directions.

% ===========================================================================

\section{Problem Formulation: Mean Field-Type Network Dynamical Systems\label{sec:problem formulation}}

We introduce the concept of a network dynamical system. Let the dynamic state of each node $i\in V := \{1,\ldots,\dn\}$ be described by the vector $\mc_i(t)\in\R^\ds$, and let the vector $\m(t)=(\mc_1,\ldots,\mc_\dn)\in\R^{\ds\dn}$ collect the states of all nodes, referred to as the \emph{microscopic state} of the network, see Fig.~\ref{fig:networkdynamics} (middle).   The dynamics of $\m$ are governed by the following initial value problem,
\begin{subequations}
\label{eq:microscopic_system}
\begin{equation}
    \m' = \mathbf{F}(\mathbf{\m},t),
    \label{eq:microscopic_system_de}
\end{equation}
subject to the initial conditions
\begin{equation}
    \m(0) = \m_o:=(\mc^o_1,\ldots, \mc^o_n)^T,
    \label{eq:microscopic_system_ic}
\end{equation}
\end{subequations}
which we refer to as the \emph{microscopic (network) system}.
We require that $\mathbf{F}:\mathbb{R}^{\ds\dn}\times\mathbb{R}\to\mathbb{R}^{\ds\dn}$ is a sufficiently smooth function such that the assumptions of the Picard-Lindelöf theorem hold. Thus, there exists a locally \emph{unique} solution to the initial value problem \eqref{eq:microscopic_system}.

We assume that the system \eqref{eq:microscopic_system} has a special structure:
\begin{definition}[Mean-field type dynamics]\label{def:meanfieldtype_dynamics}
A system of differential equations \eqref{eq:microscopic_system_de}, is said to have \emph{mean field-type dynamics} if there exists a (sufficiently smooth) function $f:\mathbb{R}^\ds\times\mathbb{R}^p\times\mathbb{R}\to\mathbb{R}^\ds$,  and a \emph{mean field function} $\mfmf:\mathbb{R}^{\ds\dn}\to\mathbb{R}^p$ (for some $p\in\N$) such that

\begin{equation}
    \mathbf{F}(\m,t) =\begin{bmatrix}
        f(\mc_1,\mfmf(\m),t)\\
        f(\mc_2,\mfmf(\m),t)\\
        \vdots\\
        f(\mc_n,\mfmf(\m),t)\\
    \end{bmatrix}.
    \label{eq:special structure}
\end{equation}

We refer to equations~\eqref{eq:microscopic_system} with mean field-type dynamics~\eqref{eq:special structure} as \emph{microscopic equations} or \emph{microscopic system with mean field-type dynamics}, since they describe the dynamics of individual nodes $i=1,\ldots,\dn$,
\begin{align}\label{eq:microscopic_system2}
    \mc'_i=f(\mc_i,\mfmf(\m),t),\quad i \in V.
\end{align}
\end{definition}

The mean field $\mfmf(\m)$ aggregates information from (a subset of) node states and feeds back identically to every node; cf.\ Fig.~\ref{fig:networkdynamics} (left). We emphasize that $\mfmf$ need not depend on all nodes---it may be computed from any subset of node states---but it must be the \emph{same} function for every node. In this sense, $\mfmf$ induces a directed bipartite structure (node states $\to$ shared aggregate $\mf$ $\to$ all nodes) rather than a pairwise coupling graph. Admissible structures for $\mfmf$ are detailed in Section~\ref{sec:mean_field_type_dynamics}.

\begin{figure*}[htp!]
\centering
\begin{overpic}[width=\textwidth]{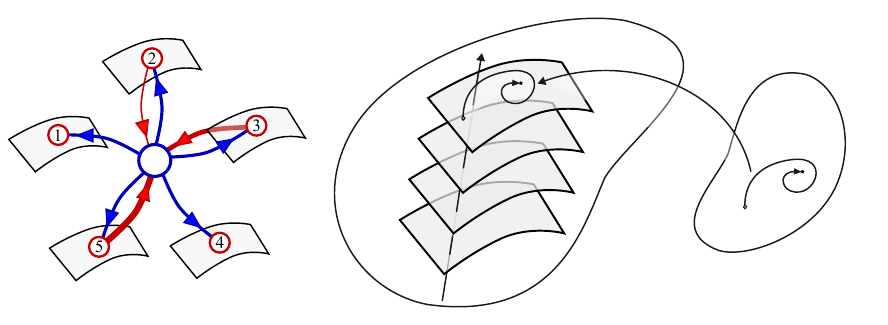}
    \footnotesize % changes font size globally in overpic scope
% % ---- left: ----
%     \put(0,35){(a)}
    \put(16.8,18.2){$\mfmf$}
    \put(9,34){\bf Nodal system space $\R^{\ds}$}
    \put(8,-4){
    \begin{minipage}{20em}
        {\bf Microscopic system:}  \vspace{0.3em} 
        \\ 
        $\mc_i' = f(\xx_i,\mfmf(\mathbf{\m}),t),$
        \\ \smallskip
        $\mathbf{\m}=(\mc_1,\ldots,\mc_\dn)$,
        $\mc_i\in\R^\ds,\; i=1, \ldots, \dn$\
    \end{minipage}
    }
% ---- middle: ----
%     \put(37,35){(b)}
    \put(48,36){\bf Microscopic space $\R^{\dn \ds}$}
    \put(58,31){\footnotesize Leaf $M_{\bm\Xi}\simeq\R^{m\ds}$}
    \put(58,21){\footnotesize  $M_{\bm\Xi'}$}
    \put(57,16){\footnotesize  $M_{\bm\Xi''}$}
    \put(56,12){\footnotesize  $M_{\bm\Xi'''}$}
    \put(55,30.5){$\xixi$}
    
    \put(52.5,22.2){\footnotesize $\m(0)$}
    \put(57,25.5){\footnotesize $\m(t)$}
    
    \put(79,26){$\Csf_\M$}
    \put(35,-4.7){
    \begin{minipage}{20em}
        {\bf Collection of superposition principles:}  \vspace{0.3em}
        \\ 
        $\m(t)=\Csf_{\M(t)}(\bm\Xi)$
        \\ 
        \vspace{\baselineskip}
        \vspace{\baselineskip}
    \end{minipage}
    }
% ----- right: ----
%     \put(79,35){(c)}
    \put(83,30){\bf Macroscopic space $\R^{m\ds}$}
    \put(80,12){\footnotesize $\M(0)$}
    \put(89,20){\footnotesize $\M(t)$}
    \put(72,-4){
    \begin{minipage}{20em}
    {\bf Macroscopic system: } \vspace{0.3em}
        \\
        $\mathbf{\Mc}_k' =     f(\mathbf{\Mc_k},\mfmf(\Csf_\mathbf{\M}(\bm\Xi),t)$
        \\ \smallskip
        $\M = (\mathbf{\Mc}_1,\ldots, \mathbf{\Mc}_m),\;\mathbf{\Mc}_k\in\R^\ds, k =   1, \;\ldots, m $ \
    
    \end{minipage}
    }
\end{overpic}
\bigskip\bigskip
\caption{Network dynamics for $\dn$ nodes with a mean field forcing $\mfmf(\m)$ which receives input from a subset of nodes (red) with varying weight (arrow thickness) and broadcasts to all nodes (blue) in the network (left).
The microscopic space (middle) is foliated with leaves labeled by $\bm \Xi$. The dynamics in the microscopic space are confined to a leaf $\bm\Xi$ due to the existence of the time independent VGL-algebra/superposition principle $\Sf$ and is described by the collection of superposition principles $\bm \phi$, parameterized by the fundamental solutions $ \M(t)$ evolving in the low dimensional macroscopic space (right).
\label{fig:networkdynamics}}
\end{figure*}

% ===========================================================================

\section{Primer on Lie-Scheffers Theory\label{sec:primer}}

We briefly review results of Lie-Scheffers theory required to address the problem. An excellent review can be found in Ref.~\cite{carinena2011lie}. The goal of Lie-Scheffers theory is to characterize the necessary and sufficient conditions under which a system of first-order differential equations of the form
\begin{equation}
    \x'_{c}(t) = F_c(t,\xx),\quad c=1,\ldots,\ds,
    \label{eq:ode system}
\end{equation}
has a \emph{superposition principle}. From now on, we assume that the system of differential equations is defined on vector spaces, i.e., $\xx(t):=(x_1,\ldots,x_\dn)\in \mathbb{R}^\dn$ and the functions $F_c$ that define the flow are real-valued and smooth.

For linear differential equations, the reader may be familiar with what is meant by superposition. For example, if $\mathbf{F}(t,\xx) = A(t)\xx$, where $A(t)$ is a $\ds\times \ds$ matrix, then by the theory of system of linear differential equations, a \emph{general solution} $\XX^{(0)}$ can be written\footnote{We use the notation of \cite{carinena2007superposition}.} as
\begin{equation}
    \XX^{(0)}(t) = \sum_{\ell=1}^\ds\xi_\ell\,\XX^{(\ell)}(t),
\end{equation}
where $\XX^{(\ell)}(t), \ell=1,\ldots,\ds$ are particular solutions of the system. Fundamental solutions are linearly independent and chosen so that they satisfy specific initial conditions. Lie's and Scheffers' theory \cite{lie1893vorlesungen} extends the idea of the superposition principle to nonlinear systems. For nonlinear systems, the term is defined as follows \cite{carinena2011lie}:

\begin{definition}[Superposition Principle and Lie System]\label{def:superposition_principle}
The system of first-order differential equations \eqref{eq:ode system} is said to admit a \emph{superposition principle}, if there exists a $t$-independent function  $\Sf:(\mathbb{R}^\ds)^m\times\mathbb{R}^\ds\to\mathbb{R}^\ds$ such that any (general) solution  of \eqref{eq:ode system} can be written in terms of any \emph{generic} set of (\textit{fundamental}) solutions  $\XX^{(1)}(t),\ldots,\XX^{(m)}(t)$ of \eqref{eq:ode system} as,
\begin{equation}
    \XX^{(0)}(t) = \Sf(\XX^{(1)}(t),\ldots,\XX^{(m)}(t),\bm{\xi}),
    \label{eq:superposition}
\end{equation}
where $\bm{\xi}\in\mathbb{R}^\ds$ is a constant vector (set by initial conditions chosen for $\XX^{(0)}(t)$). Here, \emph{generic} means that there exists an open dense subset $U \subset (\mathbb{R}^\ds)^m$, such that the expression \eqref{eq:superposition} holds for \emph{every set} of solutions $\XX^{(1)}(t),...,\XX^{(m)}(t)$ with the initial conditions $(\XX^{(1)}(0),...,\XX^{(m)}(0))\in U$.

A system of differential equations \eqref{eq:ode system} that admits a superposition principle is called a \emph{Lie system}.
\end{definition}

Note that by definition (since \eqref{eq:superposition} holds for any generic particular solution), $\Sf$ must be invariant under permutation of the first $m$ entries and corresponding permutation of entries in $\bm{\xi}$. For instance,  this can readily be seen in the case of linear systems by
\begin{equation}
    \Sf(\XX^{(1)},\ldots, \XX^{(\ds)},\bm\xi) := \sum_{\ell=1}^\ds \xi_\ell\,\XX^{(\ell)}.
\end{equation}
Thus, permuting $\XX^{(\ell)}$ (including the corresponding $\xi_\ell$) does not change $\Sf$.

The necessary and sufficient conditions for the existence of a superposition are given by the following theorem:

\begin{theorem}[Lie-Scheffers Theorem]
\label{thm:Lie theorem}
A first-order system \eqref{eq:ode system} admits a superposition principle (i.e., it is a Lie system) if and only if the vector field $\Lg(t,\xx):= \sum_{k=1}^\ds F_k(t,\xx)\partial_{x_k}$ (associated with the integral curves of the system) can be written as
\begin{equation}
    \Lg(t,\xx) = \sum_{\alpha=1}^r a_\alpha(t)\Lg_\alpha(\xx),
    \label{eq:decomposition}
\end{equation}
for some smooth functions $a_1(t),\ldots,a_r(t)$, and $\Lg_1,\ldots, \Lg_r$ are vector fields in $\mathbb{R}^\ds$ that span a $r$-dimensional real Lie algebra of vector fields. That is, 
\begin{equation}
    [\Lg_\alpha,\Lg_\beta] = \sum_{\gamma=1}^r c^\gamma_{\alpha\beta} \,\Lg_\gamma,
    \label{eq:lie:involution}
\end{equation}
for all $\alpha, \beta=1,\ldots,r$, where $r<\infty$ and $[A,B]:=AB-BA$ is the \emph{commutator}  or \emph{Lie bracket}.
\end{theorem}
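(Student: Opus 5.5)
I will prove the two directions separately, working throughout on the open dense set where the relevant maps are nondegenerate (the statement is local and generic, as in Definition~\ref{def:superposition_principle}). The central object is the \emph{diagonal prolongation} of the time-dependent vector field to $(\R^\ds)^{m+1}$,
\begin{equation*}
\widetilde{\Lg}(t,\xx^{(0)},\ldots,\xx^{(m)}) := \sum_{j=0}^{m}\sum_{k=1}^{\ds} F_k(t,\xx^{(j)})\,\partial_{x^{(j)}_k}.
\end{equation*}
Its integral curves are exactly $(m+1)$-tuples of solutions of \eqref{eq:ode system}. I will also use $\widehat{\Lg}$, the analogous prolongation to $(\R^\ds)^m$.

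\textbf{Sufficiency.} Assume \eqref{eq:decomposition}, with $\Lg_1,\ldots,\Lg_r$ closing as in \eqref{eq:lie:involution}.
\begin{enumerate}
\item The prolongations $\widetilde{\Lg}_\alpha$ satisfy the same bracket relations, since brackets of diagonal prolongations are prolongations of brackets. They therefore span an involutive distribution $\mathcal{D}$ on $(\R^\ds)^{m+1}$ of rank at most $r$.
\item Choose $m$ so that the prolonged fields $\widehat{\Lg}_\alpha$ are linearly independent at a generic point of $(\R^\ds)^m$. Such an $m$ exists with $m\ds\ge r$, because independence over the function ring lifts to pointwise independence after enough copies.
\item On $(\R^\ds)^{m+1}$ the distribution $\mathcal{D}$ then has rank $r$ and codimension $(m+1)\ds - r \ge \ds$. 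By Frobenius it has (locally) $\ds$ independent first integrals $I_1,\ldots,I_\ds$ with $\partial(I_1,\ldots,I_\ds)/\partial\xx^{(0)}$ invertible. This rank condition on the $\xx^{(0)}$-block is the point that needs care.
\item Because $\widetilde{\Lg}(t,\cdot)=\sum_\alpha a_\alpha(t)\widetilde{\Lg}_\alpha$ is tangent to $\mathcal{D}$ for every $t$, the $I_c$ are constant along solutions: $I(\xx^{(0)}(t),\ldots,\xx^{(m)}(t))=\bm\xi$.
\item The implicit function theorem gives $\xx^{(0)}=\Sf(\xx^{(1)},\ldots,\xx^{(m)},\bm\xi)$ with $\Sf$ independent of $t$. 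That is the superposition principle.
\end{enumerate}

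\textbf{Necessity.} Given $\Sf$, I will invert it generically: $\bm\xi=\Psi(\xx^{(0)},\ldots,\xx^{(m)})$, where $\partial_{\bm\xi}\Sf$ is invertible by genericity. Then $\Psi$ is a $t$-independent first integral of $\widetilde{\Lg}(t,\cdot)$ for every $t$. Consequently all the $\widetilde{\Lg}(t,\cdot)$, for varying $t$, lie in the annihilator distribution of $d\Psi_1,\ldots,d\Psi_\ds$. This annihilator is involutive and has rank at most $m\ds$. Differentiating $\xx^{(0)}=\Sf(\ldots)$ in $t$ gives
\begin{equation*}
F(t,\Sf(\xx^{(1)},\ldots,\xx^{(m)},\bm\xi)) = \sum_{j} \partial_{\xx^{(j)}}\Sf \, F(t,\xx^{(j)}).
\end{equation*}
This identity shows that $F(t,\cdot)$ at one point is determined linearly by its values at $m$ generic points.

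I then consider the linear span $V$ over $\R$ of $\{\Lg(t,\cdot): t\in\R\}$. Restricting the map $\Lg\mapsto(\Lg(\xx^{(1)}),\ldots,\Lg(\xx^{(m)}))$ to $V$ makes it injective, so $\dim V\le m\ds<\infty$. Picking a basis $\Lg_1,\ldots,\Lg_r$ of $V$ then gives \eqref{eq:decomposition} with smooth $a_\alpha$.

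The Lie algebra closure is the \textbf{main obstacle}. The Lie algebra $\mathfrak{g}$ generated by the $\widetilde{\Lg}(t,\cdot)$ still annihilates $\Psi$, because first integrals common to a family are preserved under brackets. The injectivity argument then applies to $\mathfrak{g}$ as well: an element vanishing at $m$ generic points and leaving $\Psi$ invariant must vanish identically. Hence $\dim\mathfrak{g}\le m\ds$, and replacing $V$ by $\mathfrak{g}$ yields \eqref{eq:lie:involution}. The delicate part is making rigorous that a field in $\mathfrak{g}$ vanishing at $m$ generic points vanishes everywhere. I would argue this through the identity above, extended by linearity and brackets to all of $\mathfrak{g}$.
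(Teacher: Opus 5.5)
Your sufficiency direction follows the route the paper sketches: diagonal prolongation, Frobenius, then the implicit function theorem, as in the Foliation and Counting Theorems. Your necessity direction supplies an argument the paper never gives (it only cites Cari\~nena, Grabowski and Marmo), and it is essentially their proof. So the proposal is correct and more complete than the paper.

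The one substantive difference is your choice of $m$. You require the $\widehat{\Lg}_\alpha$ to be independent on $(\R^\ds)^m$, i.e.\ \emph{without} the $\xx^{(0)}$ copy. The paper instead requires independence of $\sum_{i=0}^m\Lg_\alpha(\xx^{(i)})$ and then infers invertibility of the $\xx^{(0)}$-block from the count $(m+1)\ds-r\ge\ds$ alone. That inference fails for $\ds\ge 2$. Take coordinates $(x,y)$ on $\R^2$, $\Lg_1=\partial_x$, $\Lg_2=x\partial_x$, and $m=1$. The paper's hypotheses hold, yet the first integrals on $(\R^2)^2$ are functions of $y^{(0)},y^{(1)}$ only. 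Their $\xx^{(0)}$-block is therefore singular, and indeed no one-solution superposition rule exists.

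Your condition is exactly what settles the step you flag in (3). The projection $\pi$ that forgets $\xx^{(0)}$ satisfies $\pi_*\widetilde{\Lg}_\alpha=\widehat{\Lg}_\alpha$, so $\mathcal D\cap\ker\pi_*=0$. Hence the annihilator of $\mathcal D$ restricts surjectively onto the dual of the $\xx^{(0)}$-directions, and you can select $\ds$ first integrals with invertible $\partial I/\partial\xx^{(0)}$.

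Two small repairs are needed.

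\textbf{Step (2).} The stated reason is off: the $\Lg_\alpha$ are typically \emph{dependent} over functions (e.g.\ $\partial_x, x\partial_x, x^2\partial_x$ on $\R$). What you actually use is $\R$-linear independence together with rank stabilization. Suppose the generic rank on $(\R^\ds)^m$ satisfies $k_m=k_{m+1}<r$. At a maximal-rank point of $(\R^\ds)^m$, the kernel $K\subset\R^r$ of the evaluation map is unchanged by appending any $\xx^{(m+1)}$, since the rank can neither drop nor exceed $k_{m+1}=k_m$. So any nonzero $c\in K$ gives $\sum_\alpha c_\alpha\Lg_\alpha\equiv 0$, a contradiction. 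Hence $k_m=r$ for some $m\le r$.

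\textbf{Necessity.} The \emph{delicate part} follows at once from facts you already have. Let $Z$ lie in the Lie algebra generated by the $\Lg(t,\cdot)$, so that $\widetilde Z\Psi=0$ as you note, and suppose $Z$ vanishes at $\xx^{(1)},\ldots,\xx^{(m)}$. Then $0=\widetilde Z\Psi=\partial_{\xx^{(0)}}\Psi\,Z(\xx^{(0)})$. Invertibility of $\partial_{\xx^{(0)}}\Psi$ forces $Z(\xx^{(0)})=0$ for generic, hence all, $\xx^{(0)}$. No separate propagation of the identity through brackets is needed.
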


\noindent Note that $\Lg_\alpha$ is independent of $t$ for all $\alpha=1,\ldots,r$. The Lie algebra generated by vector fields $\Lg_\alpha$ is called the \emph{Vessiot-Guldberg Lie (VGL)} algebra $\vgl:=\text{span}\{\Lg_1(\xx),\ldots,\Lg_r(\xx)\}\subset \Tvspace(M)$ associated with the Lie-system \eqref{eq:ode system}, where $\Tvspace(M)$ is the infinite dimensional Lie algebra of all smooth vector fields on the manifold (phase space) $M=\R^\ds$. As we discuss later, the  (finite-dimensional) VGL algebra can be represented as isomorphic to a subalgebra of $\gl{\ds,\R}$. We discuss a concrete example in Sec.~\ref{sec:examples_lie_scheffers_theory} .  Example systems for which the tangent vector fields are generated by VGL algebras include the inhomogeneous first-order linear equations ($\aff{1,\R}$), homogeneous second-order linear equations ($\gl{2,\R}$), and the Riccati equation ($\sl{2,\R}$). We later provide detailed examples in Section \ref{sec:examples_lie_scheffers_theory} and in Section~\ref{sec:applications}.

Although Lie and Scheffers discussed this theorem in the late nineteenth century \cite{lie1893vorlesungen}, a rigorous proof using geometric methods was developed more recently by Bl{\'a}zquez-Sanz and Morales-Ruiz~\cite{blazquez2010local} and Cari\~{n}ena, Grabowski and Marmo \cite{carinena2007superposition, carinena2011lie}. At the core of the proof lie two ideas: \emph{diagonal prolongation} of vector fields and Frobenius' theorem \cite{lang2012differential} on the integrability of a distribution of vector fields (i.e., a subbundle of the tangent bundle). 

Briefly, one extends the phase space $M$ to $M^{m+1}\cong\mathbb{R}^{\ds(m+1)}$ by making $m+1$ copies of the system (diagonal prolongation). Since the prolonged vector fields still form a Lie algebra and hence an analogous involution condition \eqref{eq:lie:involution}, Frobenius' theorem guarantees the existence of $N-r$ independent first integrals (constants of motion) on this extended space, where $N=\ds(m+1)$. These integrals define a foliation of the extended space; the leaves of the foliation correspond to trajectories sharing the same constants $\bm{\xi}$. Inverting these relations yields the superposition function $\Sf$.

\medskip
\noindent This construction allows us to formalize the foliation structure underlying the superposition principle. A critical requirement emerging from this geometric setup is determining the number of fundamental solutions, $m$, needed to uniquely reconstruct the general solution. Specifically, since the superposition principle relies on inverting the relations defined by the foliation, we must ensure there are enough first integrals to solve for $\XX^{(0)}$. This leads to the following theorem:

\begin{theorem}[Foliation of the Extended Phase Space]
\label{thm:foliation}
    If the system \eqref{eq:ode system} admits a decomposition of the form given in \eqref{eq:decomposition} in terms of the operators $\Lg_\alpha$ which form a VGL algebra (i.e., satisfy \eqref{eq:lie:involution}), then there exists a positive integer $m$ such that $\mathbb{R}^{\ds(m+1)}$ has an $r$-dimensional foliation. That is, there exists $\Psi_j:\mathbb{R}^{\ds(m+1)}\to \mathbb{R}$ ($j=1,\ldots, \ds(m+1)-r$) satisfying the PDEs
    \begin{equation}
        \hat{\Lg}_\alpha[\Psi_j]=0,
        \label{eq:Psi's pde}
    \end{equation}
    with $\alpha=1,\ldots, r$ and $j=1,\ldots ,\ds(m+1)-r$, and where $\hat{\Lg}_\alpha$ is the diagonal prolongation of $\Lg_\alpha$. The leaves of the foliation are defined by the conditions,
    \begin{equation}
        \Psi_j (\XX^{(0)},\ldots,\XX^{(m)})=\xi_j,
        \label{eq:inverse sup princip}
    \end{equation}
    with $j=1,\ldots,\ds(m+1)-r$ and where $\xi_j$ are real constants, called \emph{leaf parameters}. To ensure that $\Sf$ exists, $m$ and $\XX^{(1)}, \ldots, \XX^{(m)}$ are chosen such that $\hat{\Lg}_\alpha:=\sum_{i=0}^m \Lg_\alpha(\XX^{(i)})$ ($\alpha=1,\ldots,r)$ are linearly independent.
\end{theorem}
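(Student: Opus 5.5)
The idea is to build the foliation from the diagonal prolongations. Two facts do the work: a linear-independence argument at generic points fixes $m$, and Frobenius' theorem supplies the first integrals $\Psi_j$.

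\emph{Step 1: prolonged fields form a Lie algebra.} For each $m$, define on $\R^{\ds(m+1)}$ the diagonal prolongation $\hat\Lg_\alpha=\sum_{i=0}^m \Lg_\alpha(\XX^{(i)})$, where the $i$-th summand acts only on the $i$-th copy of the coordinates. Vector fields acting on different copies commute, so
\begin{equation*}
[\hat\Lg_\alpha,\hat\Lg_\beta]=\sum_{i=0}^m[\Lg_\alpha,\Lg_\beta](\XX^{(i)})=\sum_\gamma c^\gamma_{\alpha\beta}\hat\Lg_\gamma .
\end{equation*}
Hence $\alpha\mapsto\hat\Lg_\alpha$ is a Lie algebra homomorphism, and the prolonged fields satisfy the same structure relations \eqref{eq:lie:involution}.

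\emph{Step 2: choosing $m$.} The $\Lg_\alpha$ are linearly independent over $\R$, since they span an $r$-dimensional algebra. I will show that for $m$ large enough the $\hat\Lg_\alpha$ are linearly independent \emph{pointwise} at generic points of $\R^{\ds(m+1)}$. This pointwise claim is the main step.

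Consider the $r\times \ds(m+1)$ matrix whose rows are the components of the $\hat\Lg_\alpha$. Its block for copy $i$ is the $r\times\ds$ matrix $A(\XX^{(i)})$ with rows $\Lg_\alpha(\XX^{(i)})$. Suppose a vector $c\in\R^r$ annihilates every block at every choice of points. Then $\sum_\alpha c_\alpha\Lg_\alpha(\xx)=0$ for all $\xx$, which forces $c=0$.

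To turn this into a finite statement, I work iteratively. Let $K_m$ be the common kernel of $c\mapsto c^TA(\XX^{(i)})$ over $i=0,\ldots,m$ at a generic point. Adding a copy either strictly shrinks $\dim K_m$ (choose the new point off the set where $c^TA$ vanishes for some nonzero $c\in K_m$), or $K_m=0$ already. So after at most $r$ copies, which is enough since $\ds(m+1)\ge r$ is needed anyway, the rank equals $r$ at some point. The rank equals $r$ exactly where some $r\times r$ minor is nonzero; the minors are real-analytic, or at least smooth, so this set is open. Density holds in the analytic or polynomial case, which covers all examples; otherwise I restrict to an open set. I take $m$ minimal with this property.

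\emph{Step 3: Frobenius.} On the open dense set $U$ where the rank is $r$, the distribution $\mathcal D=\operatorname{span}\{\hat\Lg_\alpha\}$ has constant rank $r$, and by Step 1 it is involutive. Frobenius' theorem then gives an $r$-dimensional foliation. Locally there are flat coordinates, and the last $\ds(m+1)-r$ of them are functionally independent functions $\Psi_j$ that are constant on leaves. Their constancy on leaves is exactly $\hat\Lg_\alpha[\Psi_j]=0$, which is \eqref{eq:Psi's pde}. The leaves are the level sets $\Psi_j=\xi_j$, giving \eqref{eq:inverse sup princip}.

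The remaining technical point is that the $\Psi_j$ are a priori only local. I will state the result locally, or on $U$, and note that this suffices for the superposition construction. The solvability for $\XX^{(0)}$ used for $\Sf$ is the further condition stated at the end of the theorem, which amounts to the independence of the $\hat\Lg_\alpha$ at generic points secured in Step 2.
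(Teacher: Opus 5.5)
Your Steps 1 and 3 follow the route the paper sketches: diagonal prolongation preserves the structure constants, and then Frobenius applies on the constant-rank involutive distribution. Your caveats about the $\Psi_j$ being only local, and about density, are appropriate. The gap is in Step 2 together with your closing sentence. The $m$ you select is one too small, and the independence you secure there does \emph{not} give solvability for $\XX^{(0)}$.

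\textbf{Why the minimal choice fails.} You take $m$ minimal such that the prolongation to all $m+1$ copies $\XX^{(0)},\ldots,\XX^{(m)}$ has rank $r$.
\begin{itemize}
\item By minimality and the symmetry among copies, the prolongation to the $m$ copies $\XX^{(1)},\ldots,\XX^{(m)}$ alone has rank $<r$ everywhere.
\item So at your generic points there is $c\neq 0$ with $\sum_\alpha c_\alpha \Lg_\alpha(\XX^{(i)})=0$ for $i=1,\ldots,m$, while $v=\sum_\alpha c_\alpha\Lg_\alpha(\XX^{(0)})\neq 0$.
\item Then $\sum_\alpha c_\alpha\hat\Lg_\alpha=(v,0,\ldots,0)$ is a nonzero vector tangent to the leaf that points purely in the $\XX^{(0)}$ directions.
\item Every $\Psi_j$ is constant along it, so $(\partial\Psi_j/\partial \XX^{(0)})$ has rank $<\ds$. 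The leaf is not locally a graph over $(\XX^{(1)},\ldots,\XX^{(m)})$, and no $\Sf$ can be extracted.
\end{itemize}

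\textbf{Riccati example.} Here $\ds=1$, $r=3$, and the prolongation matrix is Vandermonde. Your minimal choice is therefore $m=2$, which gives $\ds(m+1)-r=0$ first integrals, so there is nothing to invert. The cross-ratio only appears at $m=3$.

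\textbf{The correct condition} (as in Cari\~nena--Grabowski--Marmo) is that the prolongations to $\XX^{(1)},\ldots,\XX^{(m)}$ \emph{alone} be linearly independent at generic points. In practice, take your minimal value plus one. Then:
\begin{itemize}
\item $\ds m\ge r$, which is exactly the hypothesis of the Counting Theorem.
\item The rank on $\R^{\ds(m+1)}$ is still $r$, so Frobenius gives $\ds(m+1)-r\ge \ds$ first integrals.
\item $\mathrm{span}\{\hat\Lg_\alpha\}\cap(\R^\ds\times 0\times\cdots\times 0)=0$, so $(\partial\Psi_j/\partial\XX^{(0)})$ has rank $\ds$ and the implicit function theorem yields $\Sf$.
\end{itemize}
The statement's own indexing is ambiguous: it writes $\sum_{i=0}^m$ while speaking of choosing $\XX^{(1)},\ldots,\XX^{(m)}$. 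You resolved that ambiguity the wrong way.

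\textbf{A smaller wording issue in Step 2.} The set of points where $c^TA(\XX)=0$ for \emph{some} nonzero $c\in K_m$ can be all of $\R^\ds$ when $\dim K_m>\ds$. What you actually need is a point off the zero set of $c^TA$ for one fixed nonzero $c\in K_m$. Such a point exists because the $\Lg_\alpha$ are linearly independent over $\R$.
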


\noindent Note that when solving ordinary differential equations, $\XX^{(0)}$ plays the role of a \emph{general solution}. The theorem above establishes the existence of the first integrals $\Psi_j$ that define the foliation. However, to find the explicit superposition principle, we must be able to invert these relations to solve for $\XX^{(0)}$ in terms of the other particular solutions and the leaf parameters. The next theorem provides conditions under which this inversion is possible.

\begin{theorem}[Counting]
\label{thm:r<=mn}
    If $r\leq m\ds$ and the assumptions of the previous theorem hold, then there exists a superposition principle such that
    $$\xx^{(0)}=\Sf(\xx^{(1)},\ldots,\xx^{(m)},\bm{\xi}).$$
\end{theorem}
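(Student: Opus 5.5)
We use the foliation of Theorem~\ref{thm:foliation} and the implicit function theorem. By that theorem there are $\ds(m+1)-r$ functionally independent first integrals $\Psi_j$ of the prolonged vector fields $\hat{\Lg}_\alpha$ on $\mathbb{R}^{\ds(m+1)}=\mathbb{R}^\ds\times(\mathbb{R}^\ds)^m$. Their common level sets are the $r$-dimensional leaves, cut out by \eqref{eq:inverse sup princip}. The goal is to select $\ds$ of these integrals whose Jacobian with respect to the first block $\XX^{(0)}$ is invertible. The implicit function theorem then gives $\XX^{(0)}=\Sf(\XX^{(1)},\ldots,\XX^{(m)},\bm\xi)$ locally. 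Since the $\Psi_j$ are constant along the diagonally prolonged flow, $\bm\xi$ is constant in time, which is exactly the superposition property of Definition~\ref{def:superposition_principle}.

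\textbf{Step 1 (projection).} Let $\pi:(\XX^{(0)},\ldots,\XX^{(m)})\mapsto(\XX^{(1)},\ldots,\XX^{(m)})$. The hypothesis that the prolongations to $m$ copies are linearly independent at generic points means the projected fields $\sum_{i=1}^m\Lg_\alpha(\XX^{(i)})$ span an $r$-dimensional space. Together with $r\le m\ds$, this makes $d\pi$ restricted to each leaf injective: its kernel is spanned by combinations $\sum c_\alpha\hat{\Lg}_\alpha$ with vanishing components in the copies $1,\ldots,m$, and independence forces $c=0$.

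\textbf{Step 2 (transversality).} Tangent space to a leaf $=\bigcap_j\ker d\Psi_j$. Injectivity of $\pi$ on leaves says the leaf tangent space meets the vertical space $\mathbb{R}^\ds\times\{0\}$ only in $0$. By a dimension count, the rows $\partial\Psi_j/\partial\XX^{(0)}$ then span all of $(\mathbb{R}^\ds)^*$. Otherwise some vertical vector $v\neq0$ would satisfy $d\Psi_j(v)=0$ for every $j$, so $v$ would be tangent to the leaf, a contradiction. Hence we can pick $\ds$ indices $j_1,\ldots,j_\ds$ with
\begin{equation*}
\det\Big[\partial\Psi_{j_k}/\partial x^{(0)}_c\Big]\neq0.
\end{equation*}

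\textbf{Step 3 (inversion).} Apply the implicit function theorem to the equations $\Psi_{j_k}(\XX^{(0)},\ldots,\XX^{(m)})=\xi_{k}$, $k=1,\ldots,\ds$. This yields $\Sf$ on an open set, and the set where the Jacobian is invertible is open and dense in the analytic/generic sense, giving the set $U$. The leaf parameters $\bm\xi\in\mathbb{R}^\ds$ are the values of these chosen integrals. The remaining integrals only express relations among $\XX^{(1)},\ldots,\XX^{(m)}$ and are automatically satisfied along solutions.

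\textbf{Main obstacle.} The hard part is Step 1: justifying that the integer $m$ of Theorem~\ref{thm:foliation} can be taken so that independence already holds on the $m$ copies, not only on the $m+1$ copies. The condition $r\le m\ds$ is what makes this possible. I would argue that generic independence of the prolonged fields on $(\mathbb{R}^\ds)^m$ is an open condition that stabilizes once $m\ds\ge r$, taking the minimal such $m$. A secondary concern is that the construction is only local and generic, so $\Sf$ is defined on the dense open set $U$ rather than globally; this matches the notion of ``generic'' in Definition~\ref{def:superposition_principle}.
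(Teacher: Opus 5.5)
Your Steps 2 and 3 are correct, and they organize the argument differently from the paper. The paper only counts: it notes that $(m+1)\ds-r\ge\ds$ leaves at least $\ds$ first integrals, and then asserts that the $\xx^{(0)}$-block $(\partial\Psi_j/\partial x^{(0)}_c)$ is invertible. You instead derive full rank of that block from transversality of the leaves to the fibres of $\pi:M^{m+1}\to M^m$, which is the real mechanism.

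The gap is in Step 1, where you yourself place the obstacle, and the repair you propose is false. Generic independence of $\sum_{i=1}^m\Lg_\alpha(\xx^{(i)})$ on $m$ copies does not follow from $r\le m\ds$. It also does not follow from independence on the $m+1$ copies, which is what the clause in Theorem~\ref{thm:foliation} says when read literally. The inequality $r\le m\ds$ is only necessary.

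A counterexample: on $\R^2$ with coordinates $(u,v)$, take $\Lg_1=\partial_u$ and $\Lg_2=v\,\partial_u$. This is abelian with $r=2$, and corresponds to the system $u'=a_1(t)+a_2(t)v$, $v'=0$.
\begin{itemize}
\item With $m=1$ we have $r=m\ds$.
\item The prolongations $\partial_{u^{(0)}}+\partial_{u^{(1)}}$ and $v^{(0)}\partial_{u^{(0)}}+v^{(1)}\partial_{u^{(1)}}$ are independent wherever $v^{(0)}\neq v^{(1)}$. So every hypothesis of the statement, read literally, holds.
\item Yet $\Lg_1,\Lg_2$ are collinear at every point of $\R^2$. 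The vertical vector $\hat\Lg_2-v^{(1)}\hat\Lg_1=(v^{(0)}-v^{(1)})\partial_{u^{(0)}}$ is tangent to every leaf.
\item The common first integrals are functions of $v^{(0)},v^{(1)}$ only, and their $\xx^{(0)}$-Jacobian has rank one. The implicit-function step cannot be carried out.
\end{itemize}
The correct number for this system is $m=2$.

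So independence of the prolongations on the $m$ copies $\xx^{(1)},\ldots,\xx^{(m)}$ has to be assumed, not derived. This is how Cari\~nena--Grabowski--Marmo set things up. Take $m$ minimal with this property. Such an $m\le r$ always exists: each additional generic point strictly shrinks, while it is nonzero, the space of $c\in\R^r$ for which $\sum_\alpha c_\alpha\Lg_\alpha$ vanishes at all chosen points. Then $r\le m\ds$ is a consequence of the hypothesis rather than the operative assumption.

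With that hypothesis in place, your Steps 1--3 give a complete proof. The same example shows that the paper's own step, from ``at least $\ds$ first integrals'' to invertibility of the $\xx^{(0)}$-block, cannot rest on counting alone. Your transversality argument is what actually supplies that invertibility once the correct hypothesis is stated.
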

\noindent If $r\leq m\ds$, then $(m+1)\ds-r\geq \ds$.  Therefore, by comparison with the indexation of \eqref{eq:inverse sup princip}, we have \emph{at least} $\ds$ independent first integrals $\Psi_j$ (with $j=1,\ldots, (m+1)\ds-r$) in the sense of Frobenius' theorem. This ensures that the Jacobian of the map from the $(m+1)$ solutions to the integration constants is invertible with respect to the components of $\xx^{(0)}$. Specifically, the columns of $\nabla_{\hat\xx}\Psi=(\partial_{\hat x_i}\Psi_j)$ are linearly independent, where $\hat \xx :=(x_i^{(k)})\in\R^{(m+1)\ds}$ is the flattened vector of the $k=0,\ldots, m$ fundamental solutions. In particular, it follows that the sub-matrix $(\partial_{x^{(0)}_i}\Psi_j)$ (for $i,j=1,\ldots, \ds$) is invertible. By the Implicit Function Theorem, we can then invert \eqref{eq:inverse sup princip} to recover the superposition principle:
$$\XX^{(0)} = \Sf(\xx^{(1)},\ldots,\xx^{(m)},\bm{\xi}).$$
The converse of the Theorem \ref{thm:foliation} is also true.  That is, existence of the superposition principle implies that the vector field associated with the integral curves of \eqref{eq:ode system} admits the decomposition \eqref{eq:decomposition}. For further details on the construction of $\Sf$ and the determination of minimal $m$, see \cite{carinena2007superposition}. Sec.~\ref{sec:examples_lie_scheffers_theory} outlines a concrete application of this procedure.

\begin{figure}[htp!]
    \centering
    \includegraphics[width=0.45\columnwidth]{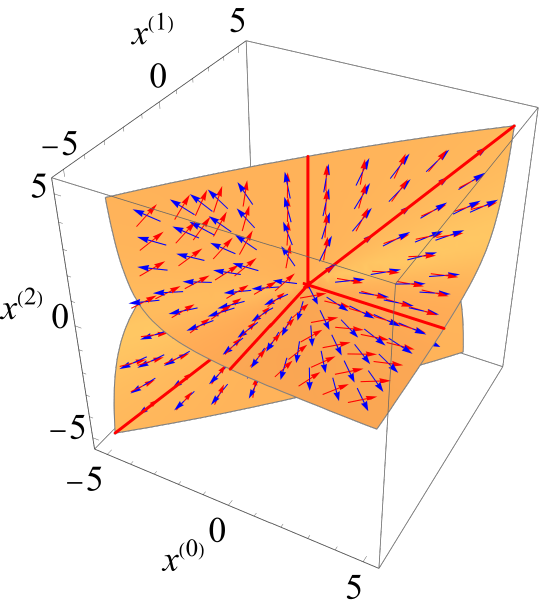}
    \includegraphics[width=0.45\columnwidth]{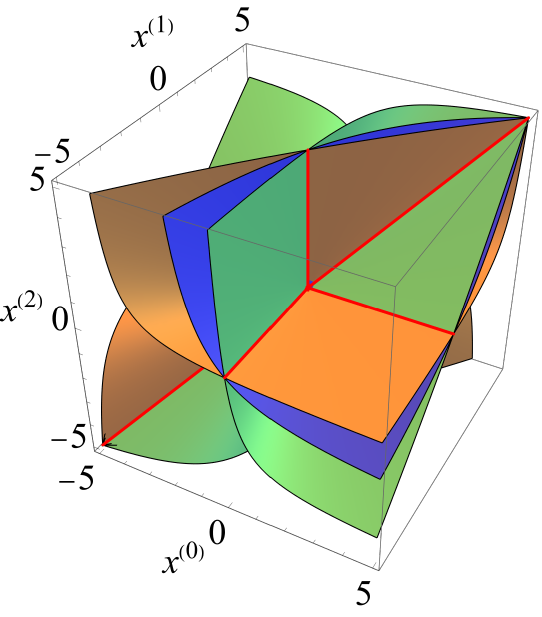}
    \caption{\label{fig:foliations}Illustration of leaves of the foliation for 
    the homogeneous ($a_0\equiv0$) Riccati equation~\eqref{eq:riccati} with dynamics governed by a $r=2$-dimensional algebra. Single leaves are shown together with arrows representing the vector field (left panel), alongside multiple leaves with distinct values for $\xi$ (right panel). The tangency of the vector field to the leaf is a consequence of the involutivity condition implying integrability. The superposition principle breaks down at \emph{non-generic points} (highlighted red) where tangent vectors are collinear. }
\end{figure}

% ===========================================================================

\subsection{Example: Riccati equation\label{sec:examples_lie_scheffers_theory}}

The Riccati equation for $x(t)\in\R$ ($d=1$) has the form
\begin{equation}
    \x'(t) = a_0(t) + a_1(t) \x + a_2(t)\x^2,
    \label{eq:riccati}
\end{equation}
where $a_0(t), a_1(t), a_2(t)\in\R$ are given smooth functions in time. The tangent vector field to the flow can be written as the linear combination
\begin{equation}
    \Lg = a_0(t) \Lg_1 + a_1(t) \Lg_2 + a_2(t) \Lg_3,
\end{equation}
with $\Lg_1=\partial_\x$, $\Lg_2 = \x \partial_\x$, $\Lg_3=\x^2\partial_\x$. These form an $r=3$-dimensional VGL algebra, $\vgl:=\text{span}\{\Lg_1,\Lg_2,\Lg_3\}$, closed under the Lie bracket:
\begin{equation}
\label{eq:ex_riccati_lie_bracket_vfield}
[\Lg_1,\Lg_2] =\Lg_1 ,\quad
[\Lg_1,\Lg_3]=2\Lg_2,\quad
[\Lg_2,\Lg_3]=\Lg_3.
\end{equation}
Thus, by Theorem \ref{thm:Lie theorem}, the Riccati equation \eqref{eq:riccati} is a Lie system.

The VGL algebra $\vgl$ is isomorphic to $\sl{2,\R}$. To see this, let $\sl{2,\R}$ have the standard basis
\begin{equation}\label{eq:ex_riccati_lie_bracket_sl2}
E=
\begin{pmatrix}
    0& 1 \\ 0 & 0
\end{pmatrix}
,\quad
F=
\begin{pmatrix}
    0& 0 \\ 1 & 0
\end{pmatrix}
,\quad
H=
\begin{pmatrix}
    1& 0 \\ 0 & -1
\end{pmatrix},
\end{equation}
with $[H,E]=2E$, $[H,F]=-2F$, $[E,F]=H$. Comparing bracket relations \eqref{eq:ex_riccati_lie_bracket_vfield} and \eqref{eq:ex_riccati_lie_bracket_sl2} identifies the isomorphism $\phi:\vgl\to\sl{2,\R}$ defined by
\[\phi(\Lg_1)=E,\quad \phi(\Lg_2)=\half H,\quad \phi(\Lg_3)=-F,\]
which can be verified to satisfy $\phi([\Lg_l,\Lg_k])=[\phi(\Lg_l),\phi(\Lg_k)]$. Hence $\vgl\simeq\sl{2,\R}$.

By the Counting Theorem~\ref{thm:r<=mn}, we have $m \geq r/\ds = 3$, which implies that the number of fundamental solutions is  at least $m\geq 3$. To verify that $m=3$ 
suffices, we seek a non-trivial first integral on the four-fold extended space 
$(\x^{(0)}, \X^{(1)}, \X^{(2)}, \X^{(3)}) \in M^4$. 
From Theorem~\ref{thm:foliation}, we seek a function $\Psi$ satisfying the following simultaneous PDEs (i.e, the first integral $\Psi$ must annihilate each prolonged field). The diagonal prolongations $\hat{\Lg}_\alpha$ of the three generators $\Lg_\alpha$ yield the simultaneous PDEs:
\begin{equation}
   \sum_{i=0}^3 \partial_{\X^{(i)}} \Psi=0, \;\;
   \sum_{i=0}^3 \X^{(i)} \partial_{\X^{(i)}} \Psi=0, \;\;
   \sum_{i=0}^3 (\X^{(i)})^2 \partial_{\X^{(i)}} \Psi=0.
      \label{eq:riccati fundamental sol condition}
\end{equation}
Direct calculation verifies that these are satisfied by
\begin{equation}
\Psi(\x^{(0)},\X^{(1)},\X^{(2)},\X^{(3)})=\frac{(\X^{(2)}-\X^{(0)})(\X^{(1)}-\X^{(3)})}{(\X^{(3)}-\X^{(0)})(\X^{(1)}-\X^{(2)})}=\xi.
\label{eq:inv super ricatti}
\end{equation}
so that $\hat \Lg_\alpha[\Psi]$.
Note that $\xi$ represents a \emph{constant of motion} and is the so-called \emph{cross-ratio} associated with the Möbius transformation.   Indeed, the Lie algebra $\mathfrak{sl}(2,\R)$ exponentiates to the Möbius group of fractional linear transformations acting on the real projective line $\mathbb{RP}^1$. We note that in the context of mean field type dynamics in networks of $\dn $ nodes (see Sec.~\ref{sec:mean_field_type_dynamics}) there are $\dn-3$  cross-ratios corresponding to constants of motion~\cite{goebel1995comment,Marvel2009}.

Inverting \eqref{eq:inv super ricatti} yields the superposition principle:
\begin{align}
\begin{split}
    \x^{(0)}(t)&=\frac{\X^{(2)}(\X^{(1)}-\X^{(3)})-\xi \X^{(3)}(\X^{(1)}-\X^{(2)})}{(\X^{(1)}-\X^{(3)})-\xi(\X^{(1)}-\X^{(2)})}\\
    &=:\Sf(\X^{(1)},\X^{(2)},\X^{(3)},\xi).
    \label{eq:gen sol riccati}
    \end{split}
\end{align}
The observation that $\Psi$ can be inverted to solve for $\x^{(0)}$ confirms that $m=3$ fundamental solutions suffice.

We consider the homogeneous Riccati equation ($a_0=0$),
\begin{equation}\label{eq:riccati_homog}
    \x'(t) = a_1(t) \x + a_2(t) \x^2.
\end{equation}
This special case simplifies the algebra while preserving the geometric structure, making it suitable for visualizing leaf dynamics (Fig.~\ref{fig:foliations}). We have $\Lg = a_1(t) \Lg_2 + a_2(t) \Lg_3$ with $[\Lg_2,\Lg_3]=\Lg_3$, so $r=2$. Clearly, $\X^{(3)}(t)\equiv 0$ is a solution. Substituting into \eqref{eq:inv super ricatti} and inverting, the general solution can be written with $m=2$ fundamental solutions as
\begin{equation}
    \X^{(0)}(t)=\frac{\X^{(2)}\X^{(1)}}{\X^{(1)}-\xi(\X^{(1)}- \X^{(2)})}.
    \label{eq:gen sol special riccati}
\end{equation}
The inverse superposition map is
\begin{equation}
\Psi(\X^{(0)},\X^{(1)},\X^{(2)}) = \frac{(\X^{(0)}-\X^{(2)}) \X^{(1)}}{(\X^{(1)}-\X^{(2)})\X^{(0)}}=\xi.
\end{equation}
In light of Theorem \ref{thm:foliation}, $\Psi$ is a first integral of the diagonally prolonged vector fields
\begin{align}
\label{eq:lin first order prolong1}
    \hat{\Lg}_1&= \X^{(0)}\partial_{\X^{(0)}} + \X^{(1)}\partial_{\X^{(1)}} + \X^{(2)}\partial_{\X^{(2)}},
    \\
    \hat{\Lg}_2&=\left({\X^{(0)}}\right)^2\partial_{\X^{(0)}} + \left({\X^{(1)}}\right)^2\partial_{\X^{(1)}} + \left({\X^{(2)}}\right)^2\partial_{\X^{(2)}},
\end{align}
that is, $\hat{\Lg}_1(\Psi)=0$ and $\hat{\Lg}_2(\Psi)=0$ (cf.~\eqref{eq:Psi's pde}), which can be verified by direct computation. Since the prolonged fields are three-dimensional, their relationship with the foliation can be visualized; see Fig.~\ref{fig:foliations}.

\subsubsection{Group-Theoretic Construction of Superposition Rules}

While the Frobenius-based approach establishes the existence of a superposition principle, there is also an elegant group-theoretic method to construct it. For a Lie system with Vessiot--Guldberg algebra $\vgl$, the procedure is as follows:

\begin{enumerate}

\renewcommand{\labelenumi}{\alph{enumi})}
    \item \textbf{Identify Matrix   Representation.}
    Find a matrix representation $\rho: \vgl \to \gl{V}$ on a vector space $V$ such that the manifold $M$ corresponds to an orbit (or a quotient) of the induced group action on $V$.
    The matrix Lie group $G\subset GL(V)$  is the Lie group generated by $\rho(\vgl)$, and each generator $\Lg_\alpha$ is associated with a constant matrix $\rho(\Lg_\alpha)$.

    \item \textbf{Lift to Linear System.}
    Transform the original ODE on $M$ into a linear ODE on the covering space $V$: $\dot{\mathbf{z}}(t) = A(t)\mathbf{z}(t)$, where $A(t) = \sum_{\alpha=1}^r a_\alpha(t)\rho(\Lg_\alpha)\in\rho(\vgl)$.

    \item \textbf{Solve Propagator Equation.}
    We express the solution to this equation via the \emph{fundamental matrix} or \emph{propagator},  $g(t)$: $\z(t)=g(t)\z(0)$. Differentiation,  $\dot{\z}(t) = \dot{g}(t)\,\z(0) = \dot{g}(t)\,g(t)^{-1}\,\z(t)$, yields the \emph{propagator equation} describing the evolution of the group element: $  \dot{g}(t) = A(t)\,g(t), \quad g(0) = I.$
    Solving for $g(t)$ integrates the (infinitesimal) Lie algebra elements $\rho(\Lg_\alpha)$ to  produce the finite Lie group element $g(t)\in G$.

    \item \textbf{Project Back.}
    Recover the general solution on $M$ via the projection $\pi: V \setminus \{0\} \to M$, so that $x(t)=\pi(g(t)\z_0)$. This projection converts the linear group action of $G$ on $V$ into a nonlinear action $G \curvearrowright M$ (defined by $g\circ x:=\pi(g z)$ for any $z\in \pi^{-1}(x)$); the trajectory remains on the group orbit $G\circ x_0$. Eliminating $g(t)$ in favor of $m$ fundamental solutions then yields the superposition principle $\Sf$ on $M$.

\end{enumerate}

\medskip \noindent \textit{Remark.} The dimension of the VGL algebra, $r=\dim(\vgl)=\dim(G)$,
determines the minimum number of fundamental solutions via $m\geq r/\ds$. The resulting macroscopic space has dimension $m\ds\geq r$, and the group orbits of the $G$-action on $M$ have dimension $r$. When $r=m\ds$, the invariant leaves coincide with the group orbits.

This perspective reveals that the ``nonlinear complexity'' of the original system is merely the projection of a linear flow on a higher-dimensional space. The propagator equation serves as the bridge between the abstract Lie algebra structure and the concrete superposition rule.

We illustrate the group-theoretic construction on the Riccati equation.

\paragraph{Lifting to the Covering Space.}
Since $\vgl \simeq \mathfrak{sl}(2,\mathbb{R})$, we use the standard representation on $V=\mathbb{R}^2$. We lift the scalar variable $x(t)$ to homogeneous coordinates $\mathbf{z}(t) = (z_1(t), z_2(t))^T$ via the projection $\pi(z_1, z_2) = z_1/z_2 = x(t)$.  
On this covering space, the nonlinear Riccati flow on $M$ corresponds to a simple linear flow. This technique effectively ``unfolds'' the complex geometry of $M$ into  vector space $V$ where the dynamics are governed by matrix multiplication.
The original Riccati equation lifts to the linear system:
\begin{equation}
    \dot{\mathbf{z}}(t) = 
    \begin{pmatrix} 
    \frac{1}{2}a_1(t) & a_0(t) \\
    -a_2(t) & -\frac{1}{2}a_1(t)
    \end{pmatrix}
    \mathbf{z}(t) \equiv A(t) \mathbf{z}(t),
    \label{eq:riccati_lifted}
\end{equation}
where the matrix $A(t)$ lies in $\mathfrak{sl}(2,\mathbb{R})$.

\paragraph{The Propagator.}
Let $g(t) \in SL(2,\mathbb{R})$ be the fundamental matrix solving $\dot{g}(t) = A(t)g(t)$ with $g(0)=I$. The solution to the lifted system is simply $\mathbf{z}(t) = g(t)\mathbf{z}(0)$. Writing $g(t) = \begin{pmatrix} a & b \\ c & d \end{pmatrix}$, the evolution of the original variable is given by the Möbius transformation:
\begin{equation}
    x(t) = \frac{a(t)x_0 + b(t)}{c(t)x_0 + d(t)}, \quad x_0 = x(0).
    \label{eq:mobius_sol}
\end{equation}
Thus, the nonlinear dynamics of $x(t)$ are entirely determined by the linear evolution of the matrix entries $(a,b,c,d)$.

\paragraph{Constructing the Superposition Rule.}
To eliminate the unknown matrix functions $a(t), b(t), c(t), d(t)$ and express $x(t)$ solely in terms of particular solutions, we require enough constraints. As determined earlier, $m=3$ fundamental solutions $\X^{(1)}, \X^{(2)}, \X^{(3)}$ are sufficient. Applying \eqref{eq:mobius_sol} to each particular solution gives a linear system for the ratios of the matrix entries. Solving this system and substituting back into the general form recovers the explicit superposition rule derived via Frobenius:
\begin{equation}
    \x^{(0)}(t) = \frac{\X^{(2)}(\X^{(1)}-\X^{(3)})-\xi \X^{(3)}(\X^{(1)}-\X^{(2)})}{(\X^{(1)}-\X^{(3)})-\xi(\X^{(1)}-\X^{(2)})},
\end{equation}
where the leaf parameter $\xi$ corresponds to the cross-ratio invariant of the Möbius group. 

This construction demonstrates that the "mechanism" behind the Lie superposition principle is simply the projection of the linear group action $SL(2,\mathbb{R}) \curvearrowright \mathbb{RP}^1$ onto the projective line.

\subsubsection{The complex-valued Riccati equation}
While our framework is formulated over the real number field, the above extends naturally to the complex Riccati equation $\dot{z} = a_0(t) + a_1(t)z + a_2(t)z^2$ with $z \in \mathbb{C}$ and $a_0,a_1,a_2\in\C$. The VGL algebra becomes $\mathfrak{sl}(2,\mathbb{C})$ acting on $\mathbb{CP}^1$ via Möbius transformations, with identical bracket relations and superposition rule (the cross-ratio is now complex-valued)~\cite{blazquezsanz2012}.

\subsection{Lie Systems vs.\ Lie Symmetries}

Before proceeding, we clarify a potential terminological confusion. Although both \emph{Lie systems} and \emph{Lie point symmetries} involve Lie algebras of vector fields, they address fundamentally different questions:

\begin{itemize}
 \item {\bf Lie Point Symmetries} are transformations that map solutions of an ODE to other solutions. They act \emph{externally} on the space of solutions and are used to reduce order or find first integrals~\cite{olver1993applications,hydon2000symmetry}. Most ODEs admit some symmetry subgroup.
 
 \item {\bf Lie--Scheffers Systems} (or simply ``Lie systems'') are ODEs whose time-dependent vector field lies within a finite-dimensional Lie algebra $\vgl$ (the Vessiot--Guldberg algebra). They act \emph{internally} to \emph{generate} the dynamics via a superposition principle~\cite{carinena2011lie,blazquez2010local}.
\end{itemize}

These structures are generally distinct: the algebra generating the flow need not coincide with any symmetry algebra of the equation. An overlap occurs only for autonomous systems, where the flow itself forms a one-parameter subgroup. Crucially, possessing symmetries does \emph{not} imply a finite superposition rule exists. 

Our framework exploits the \emph{internal} VGL algebraic structure to linearize the problem on a covering space, rather than relying on external symmetries for conservation laws. This distinction underlies the global geometric characterization developed throughout this work.

\section{Main Result: Lie-Scheffers Theory of Mean Field-type Network Dynamical Systems\label{sec:main_result}}
In this section, we present the main results of this work. We show that systems with \emph{mean-field type dynamics}, under suitable conditions (stated below in Definition \ref{def:liescheffer_structure}), admit a dimensional reduction. That is, the dynamics of the microscopic states of the system can be expressed in terms of \emph{macroscopic} states governed by a reduced system of differential equations.

We first define the notion of the mean field Lie-Scheffers structure.
\begin{definition} (Mean field Lie-Scheffers Structure)
\label{def:liescheffer_structure}
    A system of differential equations \eqref{eq:microscopic_system_de} with mean-field type dynamics \eqref{eq:special structure} is said to have a \emph{mean-field Lie-Scheffers structure} if there exists a finite-dimensional real Lie algebra of vector fields $\vgl = \text{span}\{\Lg_1,\ldots,\Lg_r\}$ on $\mathbb{R}^\ds$ such that for any smooth function $\bm{\zeta}:\mathbb{R}\to\mathbb{R}^p$, the tangent vector field $f(\mc,\bm{\zeta}(t),t)\partial_\mc$ (associated with the components of $\mathbf{F}$) can be written as
    \begin{equation}
        f(\mc,\bm{\zeta}(t),t)\partial_\mc = \sum_{\alpha=1}^r a_{\alpha}(t)\Lg_\alpha(\mc),
    \end{equation}
    where $a_1(t),\dots,a_r(t)$ are smooth scalar functions and $\Lg_\alpha$ are the basis generators of $\vgl$, and we defined $f \partial_\mc=f_1\partial_{x_1}+\cdots f_d\partial_{x_d}$.
\end{definition}
Note that even though the microscopic states of a system with mean-field dynamics \eqref{eq:special structure}  are coupled to each other through the mean-field $\mfmf$, if this field is assumed to be given, then the microscopic state equation effectively decouple, and the mean-field appears as a given background field $\bm{\zeta}^*(t)$ that is explicit in time. The following lemma formalizes this idea.

\begin{lemma}[Uncoupling]
\label{lem:uncoupling theorem}
Let \eqref{eq:microscopic_system} be a system of differential equations with mean field-type dynamics. If $\m^*(t)=(\mc^*_1,\ldots,\mc^*_\dn)$ is a solution of \eqref{eq:microscopic_system} then each of its components satisfy
\begin{equation}
    \mc' = f(\mc,\bm{\zeta}^*(t),t),
    \label{eq:x'=f(x,zeta,t)}
\end{equation}
where $\bm\zeta^*(t) =\mfmf(\m^*(t))$ is the background field considered to be only a function of $t$.
\end{lemma}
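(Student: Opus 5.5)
The lemma is essentially a restatement of the componentwise structure in Definition~\ref{def:meanfieldtype_dynamics}, so the plan is a direct unpacking. Fix a solution $\m^*(t)=(\mc^*_1,\ldots,\mc^*_\dn)$ of \eqref{eq:microscopic_system} on its interval of existence $I$. Define $\bm\zeta^*:I\to\R^p$ by $\bm\zeta^*(t):=\mfmf(\m^*(t))$. First I check that $\bm\zeta^*$ is an admissible background field. Since $\m^*$ is a $C^1$ solution and $\mfmf$ is smooth, $\bm\zeta^*$ is continuous, indeed $C^1$, as a composition. Hence $g(\mc,t):=f(\mc,\bm\zeta^*(t),t)$ is continuous in $t$ and inherits local Lipschitz continuity in $\mc$ from $f$. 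This means \eqref{eq:x'=f(x,zeta,t)} is a well-posed non-autonomous ODE on $\R^\ds$ in the sense of Picard--Lindel\"of.

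Second, I read off the $i$-th block of \eqref{eq:microscopic_system_de} using \eqref{eq:special structure}. This gives $(\mc^*_i)'(t)=f(\mc^*_i(t),\mfmf(\m^*(t)),t)=f(\mc^*_i(t),\bm\zeta^*(t),t)=g(\mc^*_i(t),t)$ for every $t\in I$ and every $i\in V$. So each component $\mc^*_i$ is an integral curve of \eqref{eq:x'=f(x,zeta,t)} with initial datum $\mc^o_i$. By uniqueness for the decoupled equation, it is \emph{the} solution of that equation through $\mc^o_i$. This uniqueness is what later allows the components to be used as particular and fundamental solutions in the sense of Definition~\ref{def:superposition_principle}.

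I expect no real obstacle. The only point needing care is conceptual: $\bm\zeta^*$ depends on the very solution being described. The argument is therefore an a posteriori consistency statement, not a way of constructing $\m^*$. I would state this explicitly, and note that all $\dn$ components satisfy the \emph{same} equation \eqref{eq:x'=f(x,zeta,t)}, because the identical function $\mfmf$ feeds every node. This shared equation is what links the lemma to Definition~\ref{def:liescheffer_structure} with $\bm\zeta=\bm\zeta^*$. Finally, I would remark that the lemma holds on the whole maximal interval $I$ of the network solution, and that no genericity assumption is needed at this stage.
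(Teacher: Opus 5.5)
Your proposal is correct and is the paper's proof: both read off the $i$-th block of \eqref{eq:special structure} along the solution and rename $\mfmf(\m^*(t))$ as the background field $\bm\zeta^*(t)$. Your extra well-posedness and uniqueness discussion goes beyond what the lemma asserts, since the paper records only the pointwise identity. That discussion also tacitly uses continuity of $\mfmf$, which Definition~\ref{def:meanfieldtype_dynamics} does not state explicitly (only $f$ is called sufficiently smooth), so list it as a hypothesis if you keep it.
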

\begin{proof}
    Let $\m^*(t)$ be a solution of \eqref{eq:microscopic_system},
   \begin{equation}
       \frac{\d\mc_i^{*}}{\d t}(t) =
        f(\mc_i^*,\mfmf(\m^*),t), \quad i \in V.
   \end{equation} 
Therefore, each component $\mc^*_i(t)$ of $\m^*(t)$ satisfies  
\begin{align}\label{eq:single_node_frozen}
    \mc_i' =f(\mc_i,\bm{\zeta}^*(t),t),
\end{align}
where we have set $\mfmf(\m^*(t))=:\bm\zeta^*(t)$ as the background field.

\end{proof}
A few remarks may be in order:

{\it Remark.}
\begin{enumerate}
\item This lemma shows that along any trajectory, the mean-field coupling acts as a time-dependent forcing term $\bm\zeta^*(t)$. While the microscopic states remain coupled through $\mfmf$, each component $\mc_i^*(t)$ of a solution $\m^*(t)$ can be treated as a solution of the single-node equation~\eqref{eq:single_node_frozen}. This allows us to apply Lie-Scheffers theory to~\eqref{eq:single_node_frozen} and subsequently lift the result to the full network.

\item To be clear, we do not state that the microscopic states $\mc_i^*(t)$ can be \emph{constructed} from~\eqref{eq:single_node_frozen}, because computing $\bm\zeta^*(t)$ would require prior knowledge of the full network state $\m^*(t)$. Rather, we assert that every component of $\m^*$ \emph{satisfies} equation \eqref{eq:single_node_frozen}.
        
\item Since $\bm\zeta^*(t) = \mfmf(\m^*(t))$ depends on the solution $\m^*(t)$ itself, its explicit form is generally unknown. However, standard ODE existence theorems guarantee such a function exists along any trajectory, although we may not be able to explicitly calculate it.
\end{enumerate}

\medskip

We now state and prove the main result of our work.
\begin{theorem}[Main result: Lie-Scheffers theorem for systems of Mean Field Type]\label{thm:main_result} Let \eqref{eq:microscopic_system} be a microscopic system of differential equations with mean-field Lie-Scheffers structure.  Then the solutions $\m^*(t)$ of the microscopic system can be expressed
\begin{equation}
    \m^*(t)=\Csf(\M(t),\bm\Xi),
    \label{eq:z*=Phi}
\end{equation}
where $\Csf:\mathbb{R}^{m\ds}\times  \mathbb{R}^{\dn\ds}\to\mathbb{R}^{\dn\ds}$
is the \emph{collection of superposition principles}
defined by 
$$\Csf:=(\Sf(\M,\xixi_1),\ldots,\Sf(\M,\xixi_n))^T.$$
The collection of superposition principles is parametrized by  the \emph{macroscopic variables} $\M:=(\mathbf{\Mc}_1,\ldots, \mathbf{\Mc}_m)^T\in \mathbb{R}^{m\ds}$ (corresponding to the \emph{fundamental solutions} $\mathbf{x}^{(1)}, \dots, \mathbf{x}^{(m)}$ introduced in Section~\ref{sec:primer}), which satisfy the \emph{macroscopic system} of differential equations,
\begin{equation}
    \M' = \mathbf{G}(\M,t),\quad\M(0)=\M_o\in\mathbb{R}^{m\ds},
    \label{eq:q'=G}
\end{equation}
where the function $\mathbf{G}:\mathbb{R}^{m\ds}\times \mathbb{R}\to\mathbb{R}^{m\ds}$ is defined by \eqref{eq:G} .
The \emph{leaf parameter} $\bm\Xi := (\xixi_1,\ldots,\xixi_\dn) \in \R^{\dn\ds}$ with $ \xixi_i\in\R^\ds$ is a constant vector determined by initial conditions on $\m^*(t)$.
\end{theorem}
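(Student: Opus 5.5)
We argue along a fixed solution $\m^*(t)$ and use the Uncoupling Lemma~\ref{lem:uncoupling theorem} to reduce the network to a single non-autonomous Lie system. Put $\bm\zeta^*(t):=\mfmf(\m^*(t))$. By Lemma~\ref{lem:uncoupling theorem}, every component $\mc_i^*(t)$ solves the single-node equation \eqref{eq:x'=f(x,zeta,t)}. Because the system has mean-field Lie--Scheffers structure (Definition~\ref{def:liescheffer_structure}), the vector field $f(\mc,\bm\zeta^*(t),t)\partial_\mc$ equals $\sum_\alpha a^*_\alpha(t)\Lg_\alpha(\mc)$ for some smooth coefficients $a^*_\alpha$. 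Here $\vgl$ is a fixed, $t$-independent finite-dimensional Lie algebra, and it does not depend on the choice of $\bm\zeta^*$. Theorem~\ref{thm:Lie theorem} then shows that \eqref{eq:x'=f(x,zeta,t)} is a Lie system. Theorems~\ref{thm:foliation} and \ref{thm:r<=mn} supply an integer $m$ with $r\le m\ds$ and a superposition function $\Sf$. Since $\Sf$ is determined by the foliation of the prolonged fields $\hat\Lg_\alpha$, it depends only on $\vgl$ and not on the coefficients $a_\alpha^*(t)$. This independence is what lets one $\Sf$ serve every trajectory.

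Next, choose the fundamental solutions. Pick initial data $\M_o=(\mathbf q_1(0),\ldots,\mathbf q_m(0))$ in the generic open dense set $U$ of Definition~\ref{def:superposition_principle}. Let $\mathbf q_k(t)$ be the solutions of the single-node equation \eqref{eq:x'=f(x,zeta,t)} with these initial data, driven by the same background $\bm\zeta^*(t)$. Every node $\mc_i^*$ solves the same Lie system, so \eqref{eq:superposition} gives $\mc_i^*(t)=\Sf(\M(t),\xixi_i)$ with constant $\xixi_i$. These constants are fixed at $t=0$ by inverting through the first integrals: $\xixi_i=\Psi(\mc_i^o,\mathbf q_1(0),\ldots,\mathbf q_m(0))$, where $\Psi$ collects the $\ds$ independent $\Psi_j$ used in Theorem~\ref{thm:r<=mn}. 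Stacking over $i$ gives \eqref{eq:z*=Phi}, with $\bm\Xi=(\xixi_1,\ldots,\xixi_\dn)$ constant.

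Closure is the main step. As constructed, $\M(t)$ still depends on the unknown $\bm\zeta^*$, and we must remove this dependence. Substituting \eqref{eq:z*=Phi} into the mean field gives $\bm\zeta^*(t)=\mfmf(\Csf(\M(t),\bm\Xi))$. Therefore $\M$ satisfies
\begin{equation}
\mathbf q_k'=f\big(\mathbf q_k,\mfmf(\Csf(\M,\bm\Xi)),t\big),\quad k=1,\ldots,m,
\label{eq:G}
\end{equation}
and this defines $\mathbf G(\M,t)$ for fixed $\bm\Xi$. It is a closed, $m\ds$-dimensional system. To prove equivalence in the other direction, and so well-posedness, take any solution $\M(t)$ of \eqref{eq:G} and define $\m(t):=\Csf(\M(t),\bm\Xi)$. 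We then check that $\m$ solves \eqref{eq:microscopic_system}. With $\bm\zeta(t):=\mfmf(\m(t))$, the $\mathbf q_k$ solve the Lie system driven by $\bm\zeta$. By the superposition principle, each $\Sf(\M(t),\xixi_i)$ solves the same Lie system, so $\m$ solves the microscopic system with matching initial data. Picard--Lindelöf uniqueness then identifies $\m$ with $\m^*$.

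The main obstacle is this self-consistency argument, together with genericity. We must ensure that $\M(t)$ stays in the generic set, so that the $\hat\Lg_\alpha$ remain independent, $\Psi$ stays invertible, and the formula holds. Otherwise $\Sf$ degenerates; in the Riccati case, for instance, coincident fundamental solutions make the cross-ratio undefined. I would handle this locally in time: $U$ is open and solutions are continuous, so the representation holds on a maximal interval. By uniqueness for the single-node equation, distinct solutions cannot merge, which suggests genericity persists wherever the linearized group action is defined. I would state the theorem on that interval.
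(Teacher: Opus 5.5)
Your proposal is correct and follows the paper's own proof. You freeze the mean field via the Uncoupling Lemma, apply the superposition principle of the resulting Lie system to every node with one common set of fundamental solutions, stack the results to obtain $\Csf$, and close the equations by substituting $\bm\zeta^*(t)=\mfmf(\Csf(\M(t),\bm\Xi))$. Your additions make explicit points the paper's proof leaves implicit: the remark that $\Sf$ depends only on $\vgl$ and not on the coefficients $a_\alpha^*$, the converse check via Picard--Lindel\"of that solutions of the macroscopic system reproduce $\m^*$, and the genericity caveat (genericity is in fact preserved along the flow, since the prolonged flow is generated by the Lie algebra $\mathrm{span}\{\hat\Lg_\alpha\}$ and therefore preserves its rank).
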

\begin{proof}
    Let $\m^*(t)$ be the solution of \eqref{eq:microscopic_system}. By Lemma~\eqref{lem:uncoupling theorem}, the components of $\m^*(t)$, solve 
    \begin{equation}
    \mc' = f(\mc,\bm{\zeta}^*(t),t).    
    \label{eq:z'=f}
    \end{equation}
    The Mean field Lie-Scheffers structure gives a superposition principle for the solutions of \eqref{eq:z'=f} via Theorem~\ref{thm:Lie theorem}. Therefore, the components of the solution vector of \eqref{eq:microscopic_system} can be written as a superposition of solutions $\M:=(\mathbf{\Mc}_1,\ldots, \mathbf{\Mc}_m)^T$ of \eqref{eq:z'=f} (for some $m$). That is, there exists a collection of superposition functions $\Csf:\mathbb{R}^m\times\mathbb{R}^\ds\to\mathbb{R}^\ds$, such that
    \begin{equation}
            \mc^*_i(t) = \Sf(\M(t),\xixi_i),
            \label{eq:z*i=Psi}
    \end{equation}
    where $i\in V$ and $\mathbf{\Mc}_1,\mathbf{\Mc}_2,\ldots,\mathbf{\Mc}_m$, are \emph{fundamental solutions} that solve \eqref{eq:z'=f} with leaf parameter $\bm\Xi:=(\bm{\xi}_1,\ldots, \bm{\xi}_n)^T$ which is determined by the initial conditions on ${\XX_i}^*(t)$.
    
    Defining $\Csf(\M,\bm\Xi):=(\Sf(\M,\xixi_1),\ldots,\Sf(\M,\xixi_n))^T$, we establish \eqref{eq:z*=Phi}. We now prove the existence of $\mathbf{G}$, cf.~\eqref{eq:q'=G}.

    Since $\mathbf{\Mc}_1,\ldots,\mathbf{\Mc}_m$ are \emph{fundamental} solutions of \eqref{eq:z'=f}, they must satisfy the said differential equation. It then follows that
    \begin{equation}\label{eq:macroscopic_system1}
        \M' =
        \begin{pmatrix}
        \mathbf{\Mc}_1' \\     \mathbf{\Mc}_2'\\ \vdots \\ \mathbf{\Mc}_m' \
        \end{pmatrix}
        =
        \begin{bmatrix}
            f(\mathbf{\Mc}_1, \bm\zeta^*(t),t)\\
            f(\mathbf{\Mc}_2, \bm\zeta^*(t),t)\\
            \vdots\\
            f(\mathbf{\Mc}_m, \bm\zeta^*(t),t)
        \end{bmatrix}.
    \end{equation}
    Recall that, as defined in Lemma~\eqref{lem:uncoupling theorem}, $\bm{\zeta }^*(t)=\mfmf(\m^*(t))$. Using \eqref{eq:z*=Phi} to write $\m^*$ in terms of $\M$, i.e., $\bm\zeta^*(t) =\mfmf(\bm\Phi(\M(t),\bm \Xi))$, we see that $\M$ satisfies the equation
    \begin{equation}
        \M' = \begin{bmatrix}
            f(\mathbf{\Mc}_1, \mfmf(\bm\Phi(\M(t),\bm\Xi)),t)\\
            f(\mathbf{\Mc}_2, \mfmf(\bm\Phi(\M(t),\bm\Xi)),t)\\
            \vdots\\
            f(\mathbf{\Mc}_m, \mfmf(\bm\Phi(\M(t),\bm\Xi)),t)
            \label{eq:G}
        \end{bmatrix},
    \end{equation}
    where we call the right-hand side of the previous equation $\mathbf{G}$. Note that this step yields \emph{closure} of the equations, since the microscopic dynamics in the mean field $\mf(\cdot)$ can be expressed in terms of $\M$ via the superposition principle $\Sf$.

    Also note that $\mathbf{G}$ depends on $\bm \Xi$ explicitly through $\mfmf(\bm\Phi(\M(t),\bm \Xi))$. This means the macroscopic system is parameterized by the leaf parameters, distinguishing different invariant manifolds.

\end{proof}

\subsection{Dimensional reduction.}
We summarize our results and put them into context, see also Fig.~\ref{fig:networkdynamics}. We have shown that the microscopic network system  \eqref{eq:microscopic_system} in $\m\in\R^{\dn\ds}$ is exactly described by the macroscopic system  \eqref{eq:macroscopic_system1} in $\M\in\R^{m\ds}$, provided that the microscopic system~\eqref{eq:microscopic_system} possesses Mean-field Lie-Scheffers structure (Def.~\ref{def:liescheffer_structure}). Mean-field Lie-Scheffers structure implies that the microscopic dynamics are of mean field type  (Def.~\ref{def:meanfieldtype_dynamics}), that is, the dynamics associated to every network node are identical. In particular, the Mean-field Lie-Scheffers structure implies that the vector field associated with every individual node is generated by a Vessiot-Guldberg algebra. If this condition is met, there exists  a (nonlinear) collection of superposition principles, $\m(t)=\Csf_{\M(t)}(\bm\Xi)\in\R^{\dn\ds}$, where the parameters $\M(t)\in\R^{m\ds}$ satisfying \eqref{eq:macroscopic_system1} generate the flow of the microscopic system, given a leaf parameter $\bm\Xi$. Provided  $m<\dn$ (i.e., the number of nodes in the network exceeds  the number of fundamental solutions required by the nodes), the macroscopic description via $\M(t)$ is \emph{lower dimensional}.

Note that we do not require that the microscopic system \eqref{eq:microscopic_system} (as a whole) possesses Mean-field Lie-Scheffers structure. 
If we did, we would expect the number of required fundamental solutions to surpass the dimensionality of the associated macroscopic system~\eqref{eq:macroscopic_system1} substantially; clearly, this would undermine our goal of a dimensional reduction. Instead, our requirements are weaker, and $m$ fundamental solutions solve the $m\ds$ dimensional macroscopic system.

\subsection{Structure of the microscopic phase space: Foliation, initial conditions, and constants of motion}

The flows of the micro- and macroscopic systems are linked through the collection of superposition principles \eqref{eq:z*=Phi},  $\m(t)=\Csf_{\M(t)}(\bm\Xi)$. Consequently, the microscopic flow is confined to an $m\ds$-dimensional manifold, denoted by $M_{\bm\Xi}\subset \R^{\dn\ds}$, as shown in Fig.~\ref{fig:networkdynamics}. This defines a continuous family of invariant manifolds, parameterized by the leaf parameter $\bm\Xi\in\R^{\dn\ds}$, and forms a \emph{foliation of the microscopic space} into $m\ds$-dimensional leaves $M(\bm\Xi)$, see Fig.~\ref{fig:networkdynamics}.

The entries of the leaf parameter $\bm\Xi $ may be seen as analogous to integration constants and can --- as such --- \emph{a priori} be chosen freely. However, if initial data of both micro- and macroscopic system are given, the leaf parameter is constrained by its compatibility with the collection of superpositions, i.e.,
\begin{equation}\label{eq:superpostion_initial}
    \m(0)=\Csf(\M(0),\bm\Xi).
\end{equation}
Thus, admissible initial data determine the corresponding invariant manifold $M(\bm\Xi)$. In other words, the continuous family of invariant manifolds $M(\bm\Xi)$ is then parameterized by the initial data via the superposition principle.

Two complementary perspectives arise:
\begin{itemize}
    \item [i)]{\it Fixed macroscopic initial data.} 
    For prescribed $\M(0)$, the leaf parameter $\bm\Xi$ determines a compatible microscopic initial condition via \eqref{eq:superpostion_initial}, and hence selects the invariant manifold $\M(\bm\Xi)$ on which the microscopic dynamics evolves.

    \item [ii)] {\it Fixed microscopic initial data.}
    Alternatively, one may prescribe $\m(0)$ and regard \eqref{eq:superpostion_initial} as an equation for $\bm\Xi$. Assuming that the superposition principle $\Sf(\M(0),\cdot)$ is locally invertible\footnote{
The superposition rule is locally invertible iff the fundamental vector fields of the associated Vessiot-Guldberg Lie algebra are linearly independent at a generic point, making the map a local diffeomorphism in the constant variables.}, the leaf parameter is determined by
    \begin{align}
       \bm\Xi &=\hat \Csf_{\M(0)}(\m(0)),
    \end{align}
    where 
    \begin{align*}
    \hat \Csf_{\M(0)}:=(\Sf^{-1}(\m(0),\M(0)),\ldots,\Sf^{-1}(\m(0),\M(0))).
    \end{align*}
    Thus, $\bm\Xi$ becomes a function of the microscopic initial condition (and the chosen macroscopic initial data).
\end{itemize}

Since the leaf parameter $\bm\Xi$ is preserved along trajectories contained in the invariant manifold $M(\bm\Xi)$, its entries are constant along the microscopic flow on that manifold. Hence, they represent \emph{constants of motion} and are analogous to integration constants.

The leaf parameter $\bm\Xi$ has $\dn\ds$ components; however, given initial condition $\M(0)$, the compatibility condition \eqref{eq:superpostion_initial} imposes $m\ds$ constraints, so that not all components of the leaf parameter are independent.
Thus, recalling that  $\Csf:\mathbb{R}^{m\ds}\times  \mathbb{R}^{n\ds}\to\mathbb{R}^{n\ds}$, the number of independent constants of motion, $\gamma$, equals the dimension of the leaf parameter $\bm\Xi$ (i.e., the dimension of the  microscopic system) minus the dimension of the macroscopic initial data:
\begin{align}\label{eq:nr_COM}
    \gamma &= 
    \ds(\dn-m).
\end{align}

To understand the physical meaning of $\gamma$, consider the Kuramoto model ($\ds=1$) with $\dn$ oscillator nodes. This system corresponds to an array of Riccati equations governed by Möbius group symmetry, generated by $m=3$ fundamental solutions. In a generic uncoupled system, one would expect $\dn$ independent integration constants per trajectory. However, because the Mean-Field Lie-Scheffers structure constrains every node to the same set of $m=3$ fundamental solutions, the network flow is confined to invariant manifolds of dimension $3$. 
Consequently, only $\gamma = \dn - 3$ \emph{independent} constants remain to distinguish between different invariant manifolds (leaves). When $\M(0)$ is fixed, choosing a specific leaf corresponds to fixing these $\dn-3$ free parameters. It is crucial to distinguish this from the trivial fact that \emph{any} ODE preserves all its initial conditions along a specific orbit; here, $\gamma$ counts the constants that are independent \emph{across the foliation}, serving to label the distinct manifolds rather than just tracking a single trajectory.

This quantity $\gamma$ is typically referred to in the literature as the number of ``constants of motion'' for the Kuramoto model; however, the more precise terminology is \emph{independent} constants of motion.

Selecting $\bm\Xi$ sets the particular leaf (invariant manifold) on which the dynamics evolves. If one moves the dynamics to a neighboring leaf while keeping the initial condition $\M(0)$ fixed, one effectively changes these $\dn-m$ free parameters, selecting a different family of trajectories consistent with the same macroscopic initial state.

\subsection{Structural properties of mean field type dynamics\label{sec:mean_field_type_dynamics}}

If a system \eqref{eq:microscopic_system} has mean field-type dynamics, then every oscillator is governed by the same differential equation \eqref{eq:special structure}. It may therefore appear that the dynamics for each node are decoupled from the dynamics of any other node, and, similarly, that the vector field can be decomposed by $\mathbf{F}=f\oplus f\oplus \ldots\oplus f$. In fact, however, the dynamics for all nodes is coupled to (an identical subset of) nodes via the mean field function $\mfmf$.

\subsection{Compatibility of mean field functions}

The mean field functions $\mfmf:\mathbb{R}^{\ds\dn}\to\mathbb{R}^p$ (for some $p\in\N$) must be compatible with mean field type dynamics, see Def.~\eqref{def:meanfieldtype_dynamics}. In particular, identical nodal dynamics implies that the mean field function $\mfmf$ must be the same for every node $ i\in V$, i.e., the mean field must not vary with node $i\in V$; e.g., summing over different subsets of nodes: $\mfmf_i=\sum_{j\in V_i\subset V} \mc_j(t)$ is not admissible. 
We outline a few instances of  mean field functions that are compatible with the framework:
\renewcommand{\labelenumi}{\roman{enumi})}
\begin{enumerate}
    \item The mean field function is not required to depend on \textit{all} nodes $i\in V$. For example,  the linear scalar mean field $\mu:\R^n\to\R$  with \(\mf(t)=\sum_{j\in V'\subset V} \x_j(t) = \x_1(t) + \x_3(t)\) where $\x_i\in\R$ is admissible.
       
    \item If the mean field function is vector-valued, different subsets of $\{\x_1,\ldots,\x_n\}$ with $\x_i\in\R$ may contribute to different vector components of $\mfmf =(\mf_1,\ldots,\mf_p)$. For example, consider the mean field function where $\mf_k=\sum_{j\in V'_k}\mc_j$ and $\mfmf=(\mf_1,0,\mf_3)=(\x_1+\x_3,0,\x_2+\x_3)$ .
    
    \item The mean field function can be a weighted function and also be nonlinear in terms of $x_i\in\R$, e.g., $\mf(t)=\sum_{j\in V} c_j (\x_j(t))^\beta$,  where $\beta\in \N$ and constant weights $c_i\in\R$ .

    \item The classical all-to-all coupling (complete graph $K_\dn$) with uniform weights~\cite{WatanabeStrogatz1993,OttAntonsen2008} as a special case is recovered with $\mf=\dn^{-1}\sum_{j=1}^\dn\x_j$.
\end{enumerate}

% ###########################################################################

\section{Applications of the Main Result\label{sec:applications}}

We apply the framework of mean-field Lie-Scheffers theory to three distinct classes of network dynamical systems: ensembles of quasi-linear equations, Riccati-type ensembles; the case of ensembles of generalized Bernoulli equations is new, exhibiting exact dimensional reduction.
In each case, we demonstrate how the underlying Lie algebraic structure enables an exact dimensional reduction.

\subsection{Ensembles of Quasi-Linear Equations}
\label{subsec:quasilinear_examples}

We begin by establishing the Lie-algebraic structure of inhomogeneous quasi-linear ordinary differential equations, then specialize to second-order systems to illustrate the dimensional reduction with a physically motivated example.

\paragraph{General Formulation and Lie Structure.}
Consider an ensemble of $\dn$ nodes where each node $i$ obeys a $\ds$-th order inhomogeneous quasi-linear differential equation:
\begin{equation}\label{eq:quasilinear_general}
    \sum_{c=0}^{\ds} a_c(\mathbf{x}, \mf(\mathbf{x}), t) \, u_i^{(c)} = b_i(\mathbf{x}, \mf(\mathbf{x}), t),
\end{equation}
where $u_i^{(c)} = d^cu_i/dt^c$, $\mathbf{x} = (u_1, \ldots, u_\dn)^T$ is the network state, and $\mf(\mathbf{x})$ is a mean-field functional. The key structural assumption is that the coefficient functions $a_c$ and $b_i$ have \emph{identical functional form} across all nodes $i$ (Note that if inhomogeneous terms $b_i$ are non-identical across nodes $i$, one still obtains a moderate dimensional reduction, but the number of resulting macroscopic equations scales with $\dn$).

To apply Lie-Scheffers theory, we reduce Eq.~\eqref{eq:quasilinear_general} to first order. For each node, define the local phase vector $\mathbf{y}_i = (u_i, u_i', \ldots, u_i^{(\ds-1)})^T \in \mathbb{R}^\ds$. Solving for the highest derivative (assuming $a_\ds \neq 0$), the system takes the matrix form:
\begin{equation}\label{eq:quasilinear_matrix_form}
    \mathbf{y}_i' = A(\mathbf{x}, \mf(\mathbf{x}), t)\,\mathbf{y}_i + \mathbf{B}(\mathbf{x}, \mf(\mathbf{x}), t).
\end{equation}
Along any solution trajectory $\mathbf{x}(t)$, the mean field becomes a specific function of time, which we denote $\mf^*(t) := \mf(\mathbf{x}(t))$. Treating $\mf^*(t)$ as a known time-dependent parameter, the tangent vector fields generating the flow are linear transformations $\Lg_{ij} = y_i \partial_{y_j}$ combined with translations $T_i = \partial_{y_i}$. These $\ds^2+\ds$ generators span the \emph{affine Lie algebra} $\aff{\ds,\R} = \gl{\ds,\R} \ltimes \mathbb{R}^\ds$, which is closed under the Lie bracket. 
This follows from the three bracket computations $[\Lg_{ij},\Lg_{kl}]=\delta_{jk} \Lg_{il}-\delta_{li}\Lg_{kj}$, $[T_i,T_j]=0$, and $[\Lg_{ij},T_k]=-\delta_{ik}T_j$.
Consequently, Eq.~\eqref{eq:quasilinear_matrix_form} constitutes a Lie system, and by Theorem~\ref{thm:main_result}, admits a superposition principle --- this replicates results from Augustsson \emph{et al.}~\cite{augustsson2025quasilinear}.

\paragraph{Specialization to Second-Order Systems.}
For concreteness, consider the case $\ds=2$. Each node satisfies:
\begin{equation}\label{eq:second_order_network}
    \ddot{u}_i + \alpha(\mf^*(t)) \dot{u}_i + \beta(\mf^*(t)) u_i = f(\mf^*(t)),
\end{equation}
where $\alpha, \beta, f$ depend on the mean field trajectory. The local phase space is $(u_i, v_i)^T \in \mathbb{R}^2$. The vector fields span $\aff{2}$, requiring $m=3$ fundamental solutions for the general inhomogeneous case (two homogeneous and one particular).

If the forcing is \emph{uniform} ($f(\mf^*)$ identical for all nodes $i$), the macroscopic dimension is $m\ds = 6$, independent of $\dn$. If $f$ instead varies across nodes, a distinct particular solution is needed per node, and the reduction advantage scales with the number of unique forcing profiles rather than vanishing entirely.

\paragraph{Network Example: Damped Hill-Mathieu Equations.}
We now specialize to a physically motivated case combining parametric driving, damping, and mean-field feedback. Consider:
\begin{equation}\label{eq:hill_network_microscopic}
    \ddot{u}_i + \gamma \dot{u}_i + \left[ \omega_0^2 + p(t) + \alpha \, \mf(\mathbf{u}, t) \right] u_i = 0, \quad i=1,\ldots,\dn,
\end{equation}
where $\gamma > 0$ is constant linear damping, $p(t)$ is an external periodic drive, and $\alpha \mf(\mathbf{u}, t)$ encodes the collective stiffness with $\mf(\mathbf{u}, t) = \dn^{-1}\sum_{j=1}^\dn w_j u_j(t)$ with constant weights $w_j\in\R$. Note that the restoring force depends on both  the global network state and time.

Since there is no independent forcing term ($\mathbf{B} \equiv 0$), the symmetry algebra reduces to $\mathfrak{gl}(2)$, requiring only $m=2$ fundamental solutions. Let $\mathbf{\Mc}^{(1)}(t)=(U_1,V_1)^T$ and $\mathbf{\Mc}^{(2)}(t)=(U_2,V_2)^T$ be two linearly independent solutions of the representative macroscopic equations:
\begin{equation}
    \begin{pmatrix} U_\ell \\ V_\ell \end{pmatrix}' = \begin{pmatrix} 0 & 1 \\ -[\omega_0^2 + p(t) + \alpha \mf^*(t)] & -\gamma \end{pmatrix} \begin{pmatrix} U_\ell \\ V_\ell \end{pmatrix},\quad \ell=1,2.
\end{equation}
Any node's trajectory decomposes as $\mathbf{y}_i(t) = c_{i,1} \mathbf{\Mc}^{(1)}(t) + c_{i,2} \mathbf{\Mc}^{(2)}(t)$, with constants $c_{i,\ell}$ fixed by initial conditions. Substituting into the mean-field definition yields closure:
\begin{equation}
    \mf^*(t) = C_1 U_1(t) + C_2 U_2(t), \quad C_\ell := \frac{1}{\dn}\sum_{i=1}^\dn w_ic_{i,\ell},\quad \ell=1,2.
\end{equation}
This leads to a self-consistent $4$-dimensional macroscopic system for $(U_1, V_1, U_2, V_2)$, reducing the original $2\dn$-dimensional problem to exactly $m\ds = 4$ coupled nonlinear ODEs, regardless of network size, $\dn$. The system possesses $\gamma = 2\dn - 4$ independent constants of motion, defining the invariant leaves of the foliated phase space. 

The $2\dn$ integration constants $\{c_{i,\ell}\}$ are defined only up to a $GL(2)$ change of basis in the choice of fundamental solutions; fixing this freedom --- e.g., by prescribing four initial conditions for $(U_\ell, V_\ell)$ --- eliminates $4$ parameters, leaving $\gamma = 2\dn - 4$ quantities that label the leaves. Concretely, one may take the leaf parameters to be the residual coefficients $(c_{k,1}, c_{k,2})$ for $k = 3, \ldots, \dn$ after setting the coefficients for two reference nodes to $(c_{1,1},c_{1,2}) = (1,0)^T$ and $(c_{2,1},c_{2,2}) = (0,1)^T$, with the mean-field couplings $C_\ell = \frac{1}{\dn}\sum_i w_i c_{i,\ell}$ then determined by these invariants alone.

Note that these results for quasilinear ensembles extend to the complex number field with the algebra $\gl{\ds,\C}$. For details and other example applications see Ref.~\cite{augustsson2025quasilinear}.

\subsection{Ensembles of Riccati Equations: The Kuramoto Model}
\label{subsec:riccati_examples}

We extend the Lie-Scheffers framework to networks of Riccati-type equations, demonstrating how the Möbius symmetry of single nodes translates into exact dimensional reductions for coupled oscillator populations~\cite{WatanabeStrogatz1993,Marvel2009}. As established in Section~\ref{sec:examples_lie_scheffers_theory}, a scalar real-valued Riccati equation admits a superposition principle governed by the $\mathfrak{sl}(2,\mathbb{R})$ algebra. This extends to complex-valued Riccati with $\sl{2,\C}$,  see Refs.~\cite{CestnikMartens2024,blazquezsanz2012}. We show how this structure enables macroscopic reductions for $\dn$ coupled nodes when coefficients are uniform across the ensemble.

\paragraph{Network Formulation and Uniformity Condition.} Consider $\dn$ nodes with states $z_k(t)\in \C$ obeying first-order differential equations of Riccati type:
\begin{equation}\label{eq:riccati_network_microscopic}
    \dot{z}_i(t) = A(\mf(\mathbf{z}), t) + B(\mf(\mathbf{z}), t) z_i + C(\mf(\mathbf{z}), t) z_i^2, \quad i\in V,
\end{equation}
where $\mf(\mathbf{z})$ is a mean-field functional such as $\mf(\mathbf{z}) = \dn^{-1}\sum_{j=1}^\dn z_j$. Importantly, the coefficient functions $A, B, C\in \C$ must be identical across all nodes. Any dependence on global state enters parametrically through the mean field.

Along any solution trajectory $\mathbf{z}(t)$, the mean field becomes a known function of time, $\mf^*(t) := \mf(\mathbf{z}(t))$. Consequently, the coefficients effectively become explicit time-dependent functions:
\[
    a(t) := A(\mf^*(t), t), \quad b(t) := B(\mf^*(t), t), \quad c(t) := C(\mf^*(t), t).
\]
Under this ``frozen'' perspective, each node satisfies the same time-dependent Riccati equation:
\begin{equation}
    \dot{z}_i = a(t) + b(t) z_i + c(t) z_i^2.
\end{equation}
As we already discussed in Sec.~\ref{sec:examples_lie_scheffers_theory}, the vector fields span $\mathfrak{sl}(2,\mathbb{C})$ (or $\mathfrak{sl}(2,\mathbb{R})$ for real variables). Thus, exactly $m=3$ fundamental solutions $\Mc_1, \Mc_2, \Mc_3$ are required which satisfy:
\begin{equation}
    \dot{\Mc}_k = a(t) + b(t) \Mc_k + c(t) \Mc_k^2, \quad k=1,2,3.
\end{equation}
If the coefficients vary across nodes (heterogeneous natural frequencies affecting $B_i$, or varying couplings affecting $C_i$), the nodes no longer share a common representative subsystem, and the macroscopic dimension scales linearly with $\dn$.

\paragraph{Superposition Principle and Geometry.}
The general solution for any node $i$ is given by the cross-ratio superposition rule (Section~\ref{sec:examples_lie_scheffers_theory}):
\begin{equation}\label{eq:riccati_superposition_general}
    z_i(t) = \frac{\Mc_2(\Mc_1 - \Mc_3) - \xi_i \, \Mc_3(\Mc_1 - \Mc_2)}{(\Mc_1 - \Mc_3) - \xi_i \, (\Mc_1 - \Mc_2)},
\end{equation}
where $\xi_i \in \mathbb{C}$ is a constant leaf parameter determined by initial conditions. This quantity represents the conserved cross-ratio $(z_i, \Mc_1; \Mc_2, \Mc_3)$ associated with the Möbius group action. Geometrically, the phase space $\mathbb{C}^\dn$ is foliated into 3-dimensional (complex) orbits of the Möbius group, parameterized by the leaf parameters $\bm\Xi = \{\xi_i\}_{i=1}^\dn$.

\paragraph{Macroscopic Reduction and Constants of Motion.}
Substituting Eq.~\eqref{eq:riccati_superposition_general} into the definition of the mean field yields explicit closure. For the complex order parameter $\mf = \dn^{-1}\sum_{j=1}^\dn z_j$:
\begin{equation}
    \mf^*(t) = \frac{1}{\dn} \sum_{j=1}^\dn \frac{\Mc_2(\Mc_1 - \Mc_3) - \xi_j \, \Mc_3(\Mc_1 - \Mc_2)}{(\Mc_1 - \Mc_3) - \xi_j \, (\Mc_1 - \Mc_2)}.
\end{equation}
This allows construction of a closed 3-dimensional (complex) system for the fundamental solutions via the macroscopic equations:
\begin{align}\label{eq:riccati_macroscopic_equations}
    \dot{\Mc}_k &= a(\mf^*(\Mc; \bm\Xi), t) + b(\mf^*(\Mc; \bm\Xi), t)\Mc_k + c(\mf^*(\Mc; \bm\Xi), t)\Mc_k^2, 
\end{align}
where $k=1,2,3$.
This contrasts with Ref.~\cite{CestnikMartens2024}, where the parameterization $z_i=Q+(y\xi_i)/(1+s\xi_i)$  has only  $Q$ satisfying a Riccati equation while $y$ and $s$ follow different coupled ODEs; this reflects a different coordinate choice on the same Möbius orbit, since $(Q,y,s)$ are rational functions of the fundamental solutions $\Mc_k$.

Crucially, this reduction is exact for any finite $\dn$, transforming the original $\dn$-dimensional problem (over $\R$ or $\C$ ) into a $3$-dimensional system, independent of network size $\dn$, with $\gamma= \dn-3$ (independent) constants of motion. Thus, as a direct consequence of the Lie--Scheffers framework, we recover the classical result  by Watanabe and Strogatz~\cite{WatanabeStrogatz1993,WatanabeStrogatz1994} for real Riccati ensembles and the result of Cestnik and Martens~\cite{CestnikMartens2024} for complex Riccati ensembles. (Over $\C$, the $\dn-3$ constants of motion are complex-valued, corresponding to $2(\dn-3)$ real degrees of freedom.)

\paragraph{Concrete Application: The Kuramoto Model.}
As a paradigmatic example, consider the Kuramoto model for $\dn$ identical phase oscillators with sinusoidal coupling:
\begin{equation}\label{eq:kuramoto_original}
    \dot{\theta}_i = \omega + \frac{K}{\dn} \sum_{j=1}^\dn \sin(\theta_j - \theta_i),
\end{equation}
where $\omega$ is the common natural frequency and $K$ the coupling strength. Using the circle embedding $z_i = e^{i\theta_i} \in \mathbb{S}^1 \subset \mathbb{C}$ and the identity $\sin(\theta_j - \theta_i) = \text{Im}(e^{i(\theta_j-\theta_i)})=\text{Im}(z_i \bar z_j)$, Eq.~\eqref{eq:kuramoto_original} transforms into:
\begin{equation}
    \dot{z}_i = i\omega z_i + \frac{K}{2}(\mf^* - \bar{\mf}^* z_i^2), \quad \text{with } \mf^*(t) = \dn^{-1}\sum_{j=1}^\dn z_j(t).
\end{equation}
This is precisely the canonical Riccati form \eqref{eq:riccati_network_microscopic} with coefficients:
\[
    a(t) =  \frac{K}{2}\mf^*(t), \quad b(t) = i\omega , \quad c(t) = -\frac{K}{2}\bar{\mf}^*(t).
\]
All nodes share identical $a$, $b$, $c$ depending only on $\mf^*(t)$, satisfying the uniformity condition. The reduction yields $m=3$ fundamental solutions evolving in $\mathbb{C}^3$, with $\dn-3$ independent cross-ratio constants $\xi_i \in\C$. 

For practical implementation, one computes initial leaf parameters via:
\[
    \xi_i = \frac{(z_i(0) - \Mc_2(0))(\Mc_1(0) - \Mc_3(0))}{(z_i(0) - \Mc_3(0))(\Mc_1(0) - \Mc_2(0))},
\]
which can be simplified by choosing a projective basis $\Mc_1(0)=0, \Mc_2(0)=1, \Mc_3(0)=-1$ yielding $\xi_i = (1-z_i(0))/(1+z_i(0))$. The original $\dn$-oscillator dynamics are then recovered exactly from the 3D macroscopic evolution \eqref{eq:riccati_macroscopic_equations} via the superposition principle in \eqref{eq:riccati_superposition_general}. 

\paragraph{Extensions.}
This framework unifies several models under Möbius symmetry. The Theta neuron model~\cite{ermentrout1986parabolic} ($\dot{\theta} = (1-\cos\theta) + I(t)(1+\cos\theta)$) maps to the inhomogeneous Riccati equation $\dot{z} = I(t) + z^2$ via the stereographic projection $z = \tan(\theta/2)$~\cite{laing2018dynamics}. Another model is a special case of the Winfree model, see Refs for further details.~\cite{ariaratnam2001phase,Bick2018c}.

\subsection{Ensembles of Generalized Bernoulli Equations}
\label{subsec:bernoulli_examples}

We conclude our examples with ensembles of Bernoulli-type equations, which exhibit a simpler algebraic structure ($\aff{1}$) than the Riccati case but accommodate a broader class of nonlinearities through the power-law exponent $p$.

\paragraph{General Formulation and Lie Structure.}
Consider $\dn$ nodes with states $x_i(t) \in \mathbb{R}$ governed by generalized Bernoulli equations with mean-field feedback:
\begin{equation}\label{eq:bernoulli_network_microscopic}
    \dot{x}_i(t) = a(\mf(\mathbf{x}), t)\, x_i + b(\mf(\mathbf{x}), t)\, x_i^p, \quad i=1,\ldots,\dn,
\end{equation}
where $p \in \mathbb{Z}$, $p \neq 1$, and $\mf(\mathbf{x})$ is a mean-field functional. As with the quasi-linear and Riccati cases, the coefficient functions $a$ and $b$ must be \emph{identical} across all nodes for dimensional reduction.

The tangent vector fields are $\Lg_1 = x\partial_x$ (dilation) and $\Lg_2 = x^p\partial_x$ (nonlinear growth), satisfying:
\begin{equation}
    [\Lg_1, \Lg_2] = (p-1)\Lg_2.
\end{equation}
These generators span a $r=2$ dimensional VGL algebra isomorphic to  $\vgl\cong \aff{1}$ (The affine algebra generates the group $\text{Aff}(1) = \{x \mapsto ax + b : a \neq 0\}$), and is spanned by generators $\Lg_D=x\partial_x$ (dilation) and $\Lg_T=\partial_x$ (translation) satisfying $[\Lg_D, \Lg_T]= -\Lg_T$, prompting the isomorphism $\phi(\Lg_D) = -\frac{1}{p-1}\Lg_1$ and $\phi(\Lg_T) = \Lg_2$.).

Note that for $p \notin \mathbb{Z}$, successive Lie brackets may generate infinitely many powers of $x$, and the system is \emph{not} a Lie system; the restriction to integer $p$ is therefore essential.  For $p=1$ we have the trivial $r=1$ dimensional algebra; for $p=2$ we recover the homogenous Riccati equation discussed earlier, see  Section~\ref{sec:examples_lie_scheffers_theory} and Fig.~\ref{fig:foliations}; one may choose to exclude $p<0$ where the occurrence of singularities restrict phase space.

\paragraph{Superposition Principle via Linearization.}
Rather than constructing the superposition rule through the Frobenius-PDE approach (Theorem~\ref{thm:foliation}), the Bernoulli equation admits an elegant alternative: the nonlinear transformation $y_k = x_k^{1-p}$ maps Eq.~\eqref{eq:bernoulli_network_microscopic} to the affine first-order linear equation:
\begin{equation}\label{eq:bernoulli_linearized}
    \dot{y}_i = (1-p)\,a(t)\, y_i + (1-p)\,b(t),
\end{equation}
where $a(t) := a(\mf^*(t), t)$ and $b(t) := b(\mf^*(t), t)$ along the mean-field trajectory. 
Unlike the stereographic projection used in the case of Riccati equations, this map is a genuine diffeomorphism on $\R^+$, so the linearization is globally well-defined wherever the Bernoulli flow remains finite.  Eq.~\eqref{eq:bernoulli_linearized} is precisely an inhomogeneous first order linear system, requiring $m=2$ fundamental solutions $y^{(1)}(t)$ and $y^{(2)}(t)$.
By linearity, the general solution can be written as $y_i(t)=\xi_i\,y^{(1)}(t)+(1-\xi_i)\,y^{(2)}(t)$, where $\xi_i=y_i(0)=x_i(0)^{1-p}$ is the leaf parameter, where  we chose $y^{(1)}(0)=1,y^{(2)}(0)=0$. Defining Bernoulli fundamental solutions via the inverse map $x^{(\ell)}(t)=(y^{(\ell)}(t))^{1/(1-p)}, \ell = 1,2$ and substituting into the linear superposition yields:
\begin{equation}\label{eq:bernoulli_superposition}
    x_i(t) = \left[\xi_i \left(\Mc^{(1)}(t)\right)^{1-p} + (1-\xi_i)\left(\Mc^{(2)}(t)\right)^{1-p}\right]^{1/(1-p)},
\end{equation}
where we identify the fundamental solutions with the macroscopic variables, $x^{(k)}=q_k, \;k=1,2$, which satisfy the Bernoulli equations:
\begin{align}
    \dot{\Mc}_k = a(t)\, \Mc_k + b(t)\,\Mc_k^p,\quad k=1,2.\
\end{align}
Note that for the special case $p=2$, this problem reduces to the homogeneous Riccati superposition rule (cf.~Section~\ref{sec:examples_lie_scheffers_theory}).

\paragraph{Macroscopic Reduction and Constants of Motion.}
Substituting Eq.~\eqref{eq:bernoulli_superposition} into the mean-field definition, e.g., $\mf = \dn^{-1}\sum_{j=1}^\dn x_j $ , yields explicit closure in terms of the two fundamental solutions. The resulting self-consistent macroscopic system is $2$-dimensional (for $d=1$), independent of $\dn$. The number of independent constants of motion is $\gamma = \dn - 2$, fewer than the Riccati case ($\dn-3$), reflecting the simpler algebraic structure.

\paragraph{Application: Bernoulli Ensembles with Mean-Field Coupling.}
The case $p=3$ ($\dot{x}_j = a(\mf^*,t)\,x_j + b(\mf^*,t)\,x_j^3$) arises
naturally in several domains where nodes interact through a shared resource
or global feedback.
In this setting, the integer-power exponent $p=3$ ensures closure of the
$\aff{1}$ VGL algebra, and the uniformity of coefficients across
nodes guarantees the exact $O(1)$ dimensional reduction, independent of
network size $\dn$. The $\dn - 2$ constants of motion constrain each node's
trajectory to a leaf within the foliated phase space.
Potential applications may include autocatalytic reactors with $\dn$ coupled well-mixed reactors~\cite{gray1983autocatalytic}.
\section{Discussion and Conclusion{\label{sec:discussion}}}
We have presented a unified framework for exact dimensional reductions in network dynamical systems based on Lie–Scheffers theory. The main result is that networks of $\dn$ nodes with identical nodal dynamics and mean-field coupling possess a dimensional reduction from $\dn \ds$  to $m\ds $ dimensions whenever the nodal vector field is generated by a finite-dimensional Vessiot–Guldberg Lie algebra requiring $m$ fundamental solutions. This reduction is exact for any finite $\dn$ and yields $\gamma = \ds (\dn-m)$ independent constants of motion that label the invariant manifolds (leaves) of the foliated phase space.
Extensions to $S$ subpopulations~\cite{Abrams2008,montbrio2004synchronization, pikovsky2008partially} are straightforward, multiplying both the macroscopic dimension $m\ds$ and the constant count $\gamma$ by $S$.

We have demonstrated how the framework recovers several known results as special cases --- including the reduction for Riccati ensembles ($\sl{2,\R}$ and $\sl{\C,2}$) encompassing the Kuramoto model~\cite{WatanabeStrogatz1993,Marvel2009,CestnikMartens2024} and Theta neurons~\cite{ermentrout1986parabolic,laing2018dynamics,Bick2018c}, and quasilinear ensembles ($\aff{\ds}$)~\cite{augustsson2025quasilinear} --- while providing the common algebraic mechanism (superposition principle and VGL structure) that previously was established on a case-by-case basis. The Bernoulli ensemble ($\aff{1}, m=2$) further illustrates that the framework extends beyond Riccati-type dynamics, accommodating arbitrary integer-power nonlinearities with correspondingly fewer constants of motion ($\gamma = \dn -2$). Indeed, the framework provides a systematic method to explore other network dynamical systems for exact dimensional reductions.  In principle, one may also envision reverse-engineering nodal dynamics by designing vector fields whose VGL algebra exhibits a prescribed Lie structure, yielding guaranteed reductions.

A key feature of the framework is that closure of the macroscopic system is automatic: substituting the superposition principle into the mean-field functional eliminates all microscopic variables, resulting in a self-consistent system in $\M$ alone. The mean-field aggregate does not need to depend on all nodes ($\mf$ may depend on any subset of nodal states), but it must be identical for every node. Architectures where different nodes couple to different neighborhoods, and hence experience different effective fields, break the homogeneity requirement and fall outside the present framework; yet, non-uniform weights of separable form, $w_{ij}=\eta \zeta_j$, are admissible, as the common factor $\eta$ cancels into a shared mean-field aggregate.

Two structural conditions are necessary for the reduction: identical nodal dynamics ($f_i=f$ for all nodes $i$), and a Vessiot--Guldberg algebra admitting a superposition principle. Together they are sufficient. Without the VGL algebra, even identical nodal dynamics leave the nodes fully entangled through $\mfmf$, possibly exploring the full $R^{\dn\ds}$ phase space rather than confined to a leaf. Conversely, heterogeneity in the nodal dynamics---such as non-identical natural frequencies or non-uniform couplings---destroys the shared VGL structure, as different nodes obey different algebras without generating a common superposition principle; the exact finite-$\dn$ reduction is lost. Without a common superposition principle, different nodes cannot be expressed in terms of the same fundamental solutions, so trajectories do not remain confined to a shared invariant manifold.
A partial exception arises in quasilinear systems with heterogeneous forcing: the solutions of the homogeneous part remain shared across nodes, but each distinct forcing profile requires its own inhomogeneous solution.

Several open questions remain, which we are currently pursuing, including the breakdown of superposition at non-generic points---where fundamental solutions collide and the cross-ratio diverges---and the extension to heterogeneous nodal dynamics. Finally, the constructive nature of the framework---testing any candidate ODE for VGL closure---turns the discovery of new reducible systems into a systematic procedure, and the resulting closed macroscopic systems open paths to efficient simulation and bifurcation analysis of large-scale networks.

\bibliography{lie} 

\end{document}